\numberwithin{equation}{section}
\theoremstyle{plain}
\newtheorem{theorem}{Theorem}[section]
\theoremstyle{plain}
\newtheorem{example}{Example}[section]
\theoremstyle{plain}
\theoremstyle{plain}
\newtheorem{proposition}{Proposition}[section]
\theoremstyle{plain}
\theoremstyle{plain}
\newtheorem{corollary}{Corollary}[section]
\newtheorem{lemma}{Lemma}[section]
\theoremstyle{plain}
\newtheorem{definition}{Definition}[section]
\theoremstyle{remark}
\newtheorem{remark}{Remark}[section]
\newcommand{\R}{\mathbb{R}}
\newcommand{\N}{\mathbb{N}}
\begin{document}

\begin{frontmatter}
\title{Large deviations built on max-stability}
\runtitle{Large deviations built on max-stability}

\begin{aug}
\author[A]{\fnms{Michael} \snm{Kupper}\ead[label=e1,mark]{kupper@uni-konstanz.de}}
\and
\author[A]{\fnms{Jos\'{e} Miguel} \snm{Zapata}\ead[label=e2,mark]{jmzg1@um.es}}
\address[A]{Department of Mathematics and Statistics, University of Konstanz.
\printead{e1,e2}}
\end{aug}

\runauthor{M. Kupper and J.~M. Zapata}

\affiliation{University of Konstanz}

\begin{abstract}
In this paper, we show that the basic results in large deviations theory hold for general monetary risk measures, which satisfy the crucial property of max-stability. A max-stable monetary risk measure fulfills a lattice homomorphism property, and satisfies under a suitable tightness condition the Laplace Principle (LP), that is, admits a dual representation with affine convex conjugate. By replacing asymptotic concentration of probability by concentration of risk, we formulate a Large Deviation Principle (LDP) for max-stable monetary risk measures, and show its equivalence to the LP. In particular, the special case of the asymptotic entropic risk measure corresponds to the classical Varadhan-Bryc equivalence between the LDP and LP. The main results are illustrated by the asymptotic shortfall risk measure.\\~

AMS 2010 Subject Classification: 60F10, 47H07, 91B30
\end{abstract}

\begin{keyword}
\kwd{Large deviations}
\kwd{max-stable monetary risk measures}
\kwd{Large Deviation  Principle}
\kwd{Laplace Principle}
\kwd{concentration function}
\kwd{asymptotic shortfall risk}
\end{keyword}

\end{frontmatter}


\section{Introduction}
Large deviations theory covers the asymptotic concentration of probability distributions. Let $(X_n)_{n\in\mathbb{N}}$ be a sequence of random variables on a probability space $(\Omega,\mathcal{F},\mathbb{P})$ with values in a metric space $S$.
If the upper and lower \emph{asymptotic concentrations} 
\[
\underline J_A:=\underset{n\to\infty}\liminf\frac{1}{n}\log \mathbb{P}(X_n\in A)\quad\mbox{and}\quad \overline J_A:=\underset{n\to\infty}\limsup\frac{1}{n}\log \mathbb{P}(X_n\in A)
\]
satisfy $J_A=\underline{J}_A=\overline{J}_A$ for some Borel set $A$, then $\mathbb{P}(X_n\in A)\approx e^{n J_A}$ as $n\to\infty$. In other words, 
the rate of convergence of $\mathbb{P}(X_n\in A)\to 0$ is exponentially fast for $J_A<0$. Generally speaking, large deviations theory aims at finding bounds for the asymptotic  concentration $J_A$, and thus quantifying the speed of convergence. 
For instance, for the sample means 
$X_n:=\frac{1}{n}(\xi_1+\ldots+\xi_n)$ of a sequence $(\xi_k)_{k\in\mathbb{N}}$ of real-valued i.i.d.~random variables, the  Cram\'{e}r's large deviations theorem states that the \emph{Large Deviation Principle} (LDP)
\[
-\underset{x\in {\rm int}(A)}\inf I(x) \le \underline{J}_A \le \overline{J}_A \le -\underset{x\in {\rm cl}(A)}\inf I(x)
\] 
holds for all Borel sets $A$, where the  \emph{rate function} $I$ is given in that case as the convex conjugate of the
logarithmic moment generating function $\Lambda(y):=\log \mathbb{E}[e^{y \xi_1}]$. Cram\'{e}r's result \citep{cramer1938nouveau}  was generalizing some previous partial results in statistics such as  Khinchin~\citep{khintchine1929neuen} and Smirnoff~\citep{smirnoff1933uber}. 
Sanov~\citep{sanov1958probability} established an analogue of Cram\'{e}r's theorem for the empirical measures associated to a sequence of i.i.d.~random variables. After these pioneers works, large deviations theory was systematically developed by Donsker and Varadhan~\citep{donsker1,donsker2,donsker3,donsker4,varadhan1966asymptotic} and Freidlin and Wentzell~\citep{freidlin}. Of great importance is Varadhan's lemma stating that a sequence $(X_n)_{n\in\mathbb{N}}$, which satisfies the LDP, also satisfies the \emph{Laplace Principle} (LP)
\[\phi_{\rm ent}(f)=\sup_{x\in S}\{f(x)-I(x)\}\] 
for all continuous bounded functions $f$. Here, $\phi_{\rm ent}$ denotes the \emph{asymptotic entropic risk measure}, which is defined as $\phi_{\rm ent}(f):=\lim_{n\to\infty} \frac{1}{n}\log \mathbb{E}[\exp(n f(X_n))]$. A converse statement, when the LP implies the LDP, is due to Bryc, see \citep{bryc1990large}. For more historical details and a deeper look into large deviations theory, we refer to the excellent monograph~\citep{zeitouni1998large} by Dembo and Zeitouni and its extensive list of references.

The asymptotic concentration satisfies $J_A:=\inf_{r\in\R} \phi_{\rm ent}(r 1_{A^c})$,
which points out another link to the asymptotic entropic risk measure. It is a natural question to which extend large deviation theory can be developed for other risk measures rather than the entropic one. Recently, several results in this direction have been developed. For instance, F\"{o}llmer and Knispel~\citep{follmer2011entropic} studied the pooling of independent risks by means of a variant of Cram\'{e}r's theorem based on a coherent version of the entropic risk measure.   
Lacker~\citep{lacker2016non} proved a non-exponential version of Sanov's theorem for the empirical measures of i.i.d.~random variables, by replacing the underlying entropy by a (time-consistent) convex risk measure. 
The latter result was extended by Eckstein~\citep{eckstein2019extended} to the empirical measures associated to Markov chains. Backhoff, Lacker and Tangpi \citep{backhoff2018non}
obtained new asymptotic results for Brownian motion, with applications to Schilder's theorem. 

In this article, we systematically establish the connection between large deviations theory and risk analysis. Especially, we show that the basic results in 
large deviations theory hold for general monetary risk measures, which satisfy the crucial property of \emph{max-stability}. We thereby replace the notion of asymptotic concentration of probability by \emph{concentration of risk}. Recall that a  \emph{monetary risk measure} is a real-valued function $\phi$ on the space $B_b(S)$ of all bounded Borel measurable functions on a metric space $S$, which satisfies
 \begin{itemize}
	    \item[(N)] $\phi(0)=0$,
		\item[(T)] $\phi(f+c)=\phi(f)+c$ for every $f\in B_b(S)$ and $c\in\R$,
		\item[(M)] $\phi(f)\le\phi(g)$ for all $f,g\in B_b(S)$ with $f\le g$. 	
	\end{itemize}	
In mathematical finance, the space $S$ models all possible states in a financial market, $f\in B_b(S)$ describe the losses of a financial position in these states (negative losses are interpreted as gains), and the number $\phi(f)$ is understood as the
capital requirement which has to be added to the position $f$ to make it acceptable. 
For an introduction to risk measures we refer to F\"ollmer and Schied~\citep{foellmer01}. We say that a monetary risk measure  $\phi\colon B_b(S)\to \mathbb{R}$  is \emph{max-stable} if
\[\phi(f\vee g)\le \phi(f)\vee\phi(g)\quad \mbox{for all }f,g \in B_b(S).\] 
Intuitively speaking, for a max-stable monetary risk measure one cannot compensate losses by gains in different states (in contrast to expectations which average over states and thus allow for such a compensation); see Lemma \ref{lem:max-stable0} below. Max-stable monetary risk measures are automatically convex, and consequently allow for dual representations. We will show in Section \ref{sec:max-stable} that this representation on the space $C_b(S)$, containing all continuous bounded functions, is of the form \[\phi(f)=\max_{x\in S}\{f(x)-I_{\min}(x)\},\] for the minimal penalty function $I_{\min}(x):=\sup_{f\in C_b(S)}\{f(x)-\phi(f)\}$. In other words, in the dual representation of a max-stable monetary risk measure, it is enough to take the supremum over all Dirac measures, in which case the convex conjugate is affine; see Theorem \ref{thm:main1} below. A typical example of a max-stable monetary risk measure is the asymptotic entropy $\phi_{\rm ent}$ (in case that the limit does not exist for all $f\in B_b(S)$, one can consider the upper asymptotic entropic risk measure, for which the limit is replaced by the limit superior). Moreover, as shown in \citep{cattaneo}, a max-stable risk measure can be represented as a maxitive integral, which is related to the Maslov integral in idempotent analysis.

Given a monetary risk measure $\phi\colon B_b(S)\to\mathbb{R}$, we define the associated \emph{concentration function} $J$, which assigns to any Borel set $A$ the value
  \[J_A:=\underset{r\in\R}\inf \phi(r 1_{A^c}).\]
By allowing an arbitrary large gain $-r1_{A^c}$, the risk on $A^c$ is neglected and isolated on $A$.    
  Hence, the negative number $J_A$ quantifies the concentration of the risk on $A$, that is, how the monetary risk measure $\phi$ penalizes the states in $A$. We say that the monetary risk measure $\phi$ satisfies the LDP with rate function $I$ if 
\[-\inf_{x\in {\rm int}(A)}I(x) \le J_{A}\le -\inf_{x\in {\rm cl}(A)}I(x)\]
for every Borel set $A\subset S$. Recall that a function $I\colon S\to [0,+\infty]$ is called rate function if it is lower semicontinuous and not identically $+\infty$. Further, we say that the monetary risk measure $\phi$ satisfies the LP with rate function $I$, if it admits the dual representation 
\[\phi(f)=\underset{x\in S}\sup\{f(x)-I(x)\}\quad\mbox{ for all }f\in C_b(S).\]
The aim of this paper is to establish the link between LDP and LP for general monetary risk measures. However,
by using the dual representation, the LP can only hold if the monetary risk measure is max-stable (at least on $C_b(S)$). On the other hand, we show that the restriction to max-stable risk measures is enough to connect the LDP and LP, and thus establish a general large deviations theory for max-stable monetary risk measures. In Theorem \ref{thm:maxStVaradhan} we show that for max-stable monetary risk measures the LDP and LP are equivalent. In that case, the rate function is uniquely determined as the minimal penalty function.
In the spirit of Bryc's lemma, we provide in Section \ref{sec:locmax-stable} sufficient conditions for the LP to hold. We study a local version of max-stability and introduce some tightness conditions for the concentration function.  We show in Theorem \ref{th:localMaxSt} that a locally max-stable monetary risk measure $\phi$ satisfies the LP and LDP with rate function $I_{\min}$,
if the following tightness condition holds:
\[
\mbox{For every $M>0$, there exists a compact set $K\subset S$ such that $J_{K^c}\le -M.$}
\]
This condition states that the concentration $J_{K^c}$ on the complement of a suitable compact is arbitrary small; for instance, for the asymptotic entropic risk measure the condition implies that the sequence $(X_n)_{n\in\mathbb{N}}$ is exponentially tight. In order to derive bounds for upper and lower concentration functions, we consider in Section \ref{sec:pairs} two monetary risk measures $\underline{\phi},\,\overline{\phi}$ on $B_b(S)$ such that $\overline{\phi}$ is max-stable and $\underline{\phi}\le\overline{\phi}$. Then, we show in Proposition \ref{prop:extendedLDP} that the respective concentration functions $\underline{J}\le\overline{J}$ satisfy the same LDP with rate funtion $I$, that is
\[
-\inf_{x\in \textnormal{int} (A)}{I}(x)\le \underline{J}_{A}\le \overline{J}_A\le -\inf_{x\in \textnormal{cl} (A)}{I}(x)
\]
for all Borel sets $A\subset S$, if and only if $\underline{\phi}=\overline{\phi}$ and satisfy the LP with rate function $I$. 

The theoretical framework is illustrated with the class of \emph{asymptotic shortfall risk measures}. For the shortfall risk measure $Z\mapsto \inf\{m\in\mathbb{R}\colon\mathbb{E}[l(Z-m)]\le 1\}$, we define the associated upper asymptotic shortfall by
\[
\overline{\phi}(f)=\limsup_{n\to\infty} \frac{1}{n} \inf\big\{m\in\mathbb{R}\colon\mathbb{E}[l(nf(X_n)-m)]\le 1\big\}\quad \mbox{for all }f\in B_b(S).
\] 
Here, $l$ is a non-decreasing loss function, which penalizes the losses of the positions $f(X_n)$, which are scaled by the risk aversion coefficient $n$; see \citep{foellmer01} for a discussion on shortfall risk measures. For instance, for the exponential loss function,
the upper asymptotic shortfall coincides with the upper asymptotic entropic risk measure. Also, the lower asymptotic shortfall is defined analogously (the limit superior is replaced by the limit inferior).
Under suitable assumptions on the loss function, we show that the asymptotic shortfalls are locally max-stable, and provide formulas for the concentration functions and rate functions, respectively. Hence, under an adjusted tightness condition, the upper/lower shortfall risk measures satisfy the LDP and LP.  

The structure of the paper is as follows.  In Section \ref{sec:max-stable}, we  provide a dual representation result for max-stable monetary risk measures on $C_b(S)$, characterize the respective convex conjugates and provide their maxitive integral representation. In Section \ref{sec:LDP}, we introduce the max-stable version of the Varadhan's Large Deviation Principle  and establish the equivalence between the LP and LDP for max-stable monetary risk measures. 
In Section \ref{sec:locmax-stable}, we study local max-stable risk measures and prove a version of Bryc's lemma for locally max-stable monetary risk measures. Section \ref{sec:pairs} contains some comparison results for pairs of  max-stable monetary risk measures. Finally, Section \ref{sec:examples} is devoted to asymptotic shortfall risk measures.

\section{Max-stable monetary risk measures}\label{sec:max-stable}
Let $(S,d)$ be a non-empty metric space
with Borel $\sigma$-algebra $\mathcal{B}(S)$.
In the following, we denote by $\mathcal{X}$ a linear subspace of $\mathbb{R}^S$ which contains the constants and  $f\vee g\in \mathcal{X}$ for all $f,g\in \mathcal{X}$.\footnote{Here, constants are identified with the constant function $S\to\mathbb{R}$, $x\mapsto c$ for all $c\in\mathbb{R}$. Further, $(f\vee g)(x):=\max\{f(x),g(x)\}$ for all $x\in S$.} Typical examples for $\mathcal{X}$ include the space
$B_b(S)$ of all  Borel measurable bounded functions, or the space $C_b(S)$ of all continuous bounded functions. 
\begin{definition}
	A function $\phi\colon \mathcal{X}\to \mathbb{R}$ is called a \emph{monetary risk measure} if  
	\begin{itemize}
		\item[(N)] $\phi(0)=0$,
		\item[(T)] $\phi(f+c)=\phi(f)+c$ for all $f\in \mathcal{X}$ and $c\in\mathbb{R}$,
		\item[(M)] $\phi(f)\le\phi(g)$ for all $f,g\in \mathcal{X}$ with $f\le g$. 	
	\end{itemize}	
A monetary risk measure which in addition is convex\footnote{That is, $\phi(\lambda f+(1-\lambda)g)\le\lambda\phi(f)+(1-\lambda)\phi(g)$ for all $f,g\in \mathcal{X}$ and $\lambda\in[0,1]$.} is called a \emph{convex risk measure}.
\end{definition}
A monetary risk measure $\phi\colon \mathcal{X}\to \mathbb{R}$ is uniquely determined through its acceptance set, which is defined as   
$\mathcal{A}:=\{f\in \mathcal{X}\colon \phi(f)\le 0\}$.\footnote{Indeed, by the translation property (T),  $\phi(f)=\inf\{c\in\mathbb{R}\colon\phi(f)\le c\}=\inf\{c\in\mathbb{R}\colon\phi(f-c)\le 0\}=\inf\{c\in\mathbb{R}\colon f-c\in\mathcal{A}\}$.}  

\begin{definition}
	We call a monetary risk measure $\phi\colon \mathcal{X}\to \mathbb{R}$ \emph{max-stable}  if  \[\phi(f\vee g)\leq \phi(f)\vee\phi(g)\quad \mbox{for all } f,g\in \mathcal{X}.\]
\end{definition}

Alternatively, the notion of max-stability can be formulated as in the following lemma.
The second condition means that $\phi$ is a lattice homomorphism. The fourth condition
has the interpretation that for a max-stable monetary risk measure one cannot compensate losses by large profits in other states.

\begin{lemma}\label{lem:max-stable0} 
	For a monetary risk measure $\phi\colon\mathcal{X}\to\mathbb{R}$, the following are equivalent:
	\begin{itemize}
		\item[(i)] $\phi$ is max-stable.
		\item[(ii)] $\phi(f\vee g)= \phi(f)\vee \phi(g)$ for all $f,g\in\mathcal{X}$.
		\item[(iii)] $f\vee g\in\mathcal{A}$ for all $f,g\in\mathcal{A}$.
	\end{itemize}
If $\mathcal{X}=B_b(S)$ then the  conditions \textnormal{(i)}--\textnormal{(iii)}  are equivalent to
\begin{itemize}
\item[(iv)] $f 1_B + c 1_{B^c}\in\mathcal{A}$ and $f 1_{B^c} + c 1_{B}\in\mathcal{A}$ for $c\in\mathbb{R}$ and $B\in \mathcal{B}(S)$ implies $f\in\mathcal{A}$.  
\end{itemize} 	
\end{lemma}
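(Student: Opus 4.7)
The plan is to prove the cycle (i) $\Leftrightarrow$ (ii), (ii) $\Rightarrow$ (iii) $\Rightarrow$ (i), and then in the bounded-measurable setting (iii) $\Leftrightarrow$ (iv). The first three implications rely only on the axioms (N), (T), (M) plus the stability of $\mathcal{X}$ under $\vee$ and the translation property, so they should be quick; the genuine content is the (iv)-equivalence.

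For (i) $\Leftrightarrow$ (ii), monotonicity (M) applied to $f \le f \vee g$ and $g \le f \vee g$ always yields $\phi(f) \vee \phi(g) \le \phi(f \vee g)$, so that the reverse inequality in (i) is in fact an equality. For (ii) $\Rightarrow$ (iii), if $\phi(f) \le 0$ and $\phi(g) \le 0$, then $\phi(f \vee g) = \phi(f) \vee \phi(g) \le 0$. For (iii) $\Rightarrow$ (i), I would use (T) to translate: set $c := \phi(f) \vee \phi(g)$, so that $f - c, g - c \in \mathcal{A}$; then (iii) gives $(f - c) \vee (g - c) = (f \vee g) - c \in \mathcal{A}$, which by (T) is exactly $\phi(f \vee g) \le c$.

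The (iv)-equivalence is the main step. For (iii) $\Rightarrow$ (iv), the key observation is that if $B \in \mathcal{B}(S)$ and $c \in \R$, then
\[
\bigl(f 1_B + c 1_{B^c}\bigr) \vee \bigl(f 1_{B^c} + c 1_B\bigr) = f \vee c
\]
pointwise. Assuming both expressions on the left lie in $\mathcal{A}$, (iii) yields $f \vee c \in \mathcal{A}$, and monotonicity (M) then gives $\phi(f) \le \phi(f \vee c) \le 0$, i.e.\ $f \in \mathcal{A}$. For the converse (iv) $\Rightarrow$ (iii), given $f, g \in \mathcal{A}$, I would set $B := \{f \ge g\}$, so that $f \vee g = f 1_B + g 1_{B^c}$, and choose $c \in \R$ with $c \le \inf_S f \wedge \inf_S g$, which exists because $f, g \in B_b(S)$. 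Then pointwise
\[
(f \vee g) 1_B + c 1_{B^c} = f 1_B + c 1_{B^c} \le f, \qquad (f \vee g) 1_{B^c} + c 1_B = g 1_{B^c} + c 1_B \le g,
\]
so by monotonicity both functions lie in $\mathcal{A}$, and (iv) applied to $f \vee g$ in place of $f$ yields $f \vee g \in \mathcal{A}$.

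The only subtle point I anticipate is the pointwise identity underlying (iii) $\Rightarrow$ (iv): one must check separately on $B$ (where the first function equals $f$ and the second equals $c$) and on $B^c$ (where the roles are swapped) that the maximum is $f \vee c$. Once this bookkeeping is done, everything else is immediate from (M), (T), and the boundedness of elements of $B_b(S)$, which is precisely what is used to ensure that a global lower bound $c$ for $f$ and $g$ exists.
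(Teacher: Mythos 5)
Your proposal is correct and follows essentially the same route as the paper: the same monotonicity argument for (i)$\Leftrightarrow$(ii), the same translation-by-$c:=\phi(f)\vee\phi(g)$ for (iii)$\Rightarrow$(i), the same identity $f\vee c=(f1_B+c1_{B^c})\vee(f1_{B^c}+c1_B)$ for (iii)$\Rightarrow$(iv), and the same choice $B=\{f\ge g\}$ with a common lower bound $c$ for (iv)$\Rightarrow$(iii). No gaps.
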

\begin{proof}
	$\textnormal{(i)}\Rightarrow \textnormal{(ii)}$: By monotonicity $\rm{(M)}$, it holds 
	$\phi(f\vee g)\le\phi(f)\vee\phi(g)\le\phi(f\vee g)$.

	$\textrm{(ii)}\Rightarrow \textrm{(iii)}$: For $f,g\in\mathcal{A}$, it holds $\phi(f\vee g)=\phi(f)\vee\phi(g)\le 0$, so that $f\vee g\in\mathcal{A}$.
	
	$\textrm{(iii)}\Rightarrow\textrm{(i)}$: Let $f,g\in \mathcal{X}$ and set $c:=\phi(f)\vee\phi(g)$.
	By the translation property (T),  $f-c,g-c\in\mathcal{A}$, which shows that
	\[
	\phi\big((f\vee g)-c\big)=\phi\big((f-c)\vee(g-c)\big)\le 0.
	\]
	Again, by the translation property (T), we get $\phi(f\vee g)\le c=\phi(f)\vee\phi(g)$. 

Finally, we assume that $\mathcal{X}=B_b(S)$ and prove $\textrm{(iii)}\Leftrightarrow\textrm{(iv)}$.

$\textrm{(iii)}\Rightarrow\textrm{(iv)}$: Suppose that $f 1_B + c 1_{B^c}\in\mathcal{A}$ and $f 1_{B^c} + c 1_{B}\in\mathcal{A}$ for $c\in\mathbb{R}$ and $B\in\mathcal{B}(S)$. 
Since $f\vee c=(f 1_B + c 1_{B^c})\vee(f 1_{B^c} + c 1_{B})$ it follows from $\textrm{(iii)}$ that 
$f\vee c\in\mathcal{A}$. Hence, by monotonicity (M) 
we get $\phi(f)\le\phi(f\vee c)\le 0$, and therefore $f\in\mathcal{A}$. 

$\textrm{(iv)}\Rightarrow\textrm{(iii)}$: Let $f,g\in\mathcal{A}$ and fix $c\in \mathbb{R}$ with $c\le f$ and $c\le g$. Then, by monotonicity (M) it follows that  
$\phi((f\vee g) 1_B + c 1_{B^c})\le \phi(f)\le 0$, where $B:=\{f\ge g\}$. This shows that $(f\vee g) 1_B + c 1_{B^c}\in \mathcal{A}$, and similarly  
$(f\vee g) 1_{B^c} + c 1_{B}\in \mathcal{A}$, which by 
$\textrm{(iv)}$ implies that $f\vee g\in\mathcal{A}$. 
\end{proof}	

\begin{proposition}\label{prop:convex}
	Every max-stable monetary risk measure $\phi\colon \mathcal{X}\to \mathbb{R}$ is convex.
\end{proposition}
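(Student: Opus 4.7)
The plan is to deduce convexity from max-stability by a normalization trick: use translation invariance to reduce both arguments to the acceptance set, then exploit that the convex combination is pointwise dominated by the pointwise maximum.

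More concretely, given $f,g\in\mathcal{X}$ and $\lambda\in[0,1]$, I would set $a:=\phi(f)$ and $b:=\phi(g)$ and consider the translated functions $f-a$ and $g-b$. By the translation property (T), both lie in the acceptance set $\mathcal{A}$, so by the equivalence $\textnormal{(i)}\Leftrightarrow\textnormal{(iii)}$ of Lemma \ref{lem:max-stable0} one has $(f-a)\vee(g-b)\in\mathcal{A}$, i.e.\ $\phi\bigl((f-a)\vee(g-b)\bigr)\le 0$.

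Next, I would use the elementary pointwise inequality
\[
\lambda(f-a)+(1-\lambda)(g-b)\le (f-a)\vee(g-b),
\]
valid because any convex combination of two numbers is bounded above by their maximum. Monotonicity (M) then gives
\[
\phi\bigl(\lambda(f-a)+(1-\lambda)(g-b)\bigr)\le\phi\bigl((f-a)\vee(g-b)\bigr)\le 0.
\]
Finally, applying translation invariance (T) to rewrite the left-hand side as $\phi(\lambda f+(1-\lambda)g)-\lambda a-(1-\lambda)b$ yields the convexity inequality $\phi(\lambda f+(1-\lambda)g)\le\lambda\phi(f)+(1-\lambda)\phi(g)$.

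I do not foresee a real obstacle here; the argument is short and only uses the three defining properties (N), (T), (M) together with max-stability through Lemma \ref{lem:max-stable0}. The only subtlety worth double-checking is that the pointwise bound $\lambda f'+(1-\lambda)g'\le f'\vee g'$ is genuinely pointwise (so monotonicity applies), and that the constants $a,b$ can be freely shifted in and out of $\phi$ by (T); both are immediate.
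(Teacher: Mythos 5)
Your proof is correct and follows essentially the same route as the paper: translate $f$ and $g$ by their risk values, bound the convex combination pointwise by the maximum, and apply monotonicity together with max-stability (which you access via the acceptance-set formulation of Lemma \ref{lem:max-stable0}, while the paper applies the inequality $\phi(u\vee v)\le\phi(u)\vee\phi(v)$ directly). The two presentations differ only cosmetically.
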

\begin{proof}
	For $f,g\in \mathcal{X}$ and $\lambda\in [0,1]$, it holds
	\begin{align*}
	\phi\big(\lambda f + (1-\lambda)g\big) - \lambda\phi(f) - (1-\lambda)\phi(g)
	&=\phi\big(\lambda (f-\phi(f)) + (1-\lambda)(g-\phi(g))\big)\\
	&\le\phi\big((f-\phi(f))\vee(g-\phi(g))\big)\\
	&\le\phi(f-\phi(f))\vee\phi(g-\phi(g))=0,
	\end{align*}
	where the second inequality follows from max-stability. 
\end{proof}
\subsection{Dual representation of max-stable monetary risk measures}
Notice that Proposition~\ref{prop:convex} allows for convex duality and thus for a dual characterization of max-stability. For a max-stable monetary risk measure $\phi\colon C_b(S)\to\mathbb{R}$, we define its \emph{convex conjugate} $\phi^\ast\colon\mathop{\rm ca^+_1}(S)\to[0,+\infty]$ by
\[
\phi^\ast(\mu):=\sup_{f\in C_b(S)}\Big\{\int f\,{\rm d}\mu-\phi(f)\Big\},
\]
where ${\rm ca^+_1}(S)$ denotes the set of all probability measures on $\mathcal{B}(S)$. Especially, for the Dirac measure $\delta_x$, for $x\in S$, it holds 
\begin{equation}\label{eq:ratemin}
I_{\rm min}(x):=\phi^\ast(\delta_x)=\sup_{f\in C_b(S)}\{f(x)-\phi(f)\}=\sup_{f\in\mathcal{A}} f(x),
\end{equation}
where in the last equality it is used that
$f-\phi(f)\in\mathcal{A}$ for all $f\in C_b(S)$ by the translation property (T). In particular, the function $S\to[0,+\infty]$, $x\mapsto I_{\rm min}(x)$ is lower-semicontinuous. On the other hand, even though the conjugate $\mu\mapsto \phi^\ast(\mu)$ is convex,  it does not follow that $x\mapsto I_{\rm min}(x)$ is  convex (in case that $S$ is a convex set). 

Now we are ready to state our first main result. 

\begin{theorem}\label{thm:main1} 
Suppose that $(S,d)$ is separable and let $\phi\colon C_b(S)\to\mathbb{R}$ be a monetary risk measures which is continuous from above\footnote{That is, for every sequence $(f_n)_{n\in\mathbb{N}}$ in $C_b(S)$ such that $f_n\downarrow 0$, it holds $\phi(f_n)\downarrow 0$.}. 
	Then, the following are equivalent:
	\begin{itemize}
		\item[(i)] $\phi$ is max-stable.
		\item[(ii)] $f\vee g\in \mathcal{A}$ for all $f,g\in \mathcal{A}$.
		\item[(iii)] $\phi(f)=\max_{x\in S} \{ f(x) - I_{\rm min}(x)\}$ for all $f\in C_b(S)$.
		\item[(iv)] $\phi$ is convex and $\phi^\ast(\mu)=\int \phi^\ast(\delta_x)\,\mu({\rm d}x)$ for all $\mu\in\mathop{\rm ca}^+_1(S)$.
		\item[(v)] There is a function $\gamma\colon S\to (-\infty,+\infty]$ such that $\mathcal{A}=\{f\in C_b(S)\colon f\leq \gamma\}$.
	\end{itemize}
In this case, the function $S\to [0,+\infty]$, $x\mapsto I_{\rm min}(x)$ has compact sublevel sets.   
\end{theorem}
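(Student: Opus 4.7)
My plan is to prove the cycle (i) $\Leftrightarrow$ (ii) $\Rightarrow$ (iii) $\Rightarrow$ (iv) $\Rightarrow$ (v) $\Rightarrow$ (ii) and to deduce compactness of the sublevel sets of $I_{\min}$ from the dual representation machinery. The equivalence (i) $\Leftrightarrow$ (ii) is Lemma~\ref{lem:max-stable0}. The short links are: (iii) $\Rightarrow$ (v) with $\gamma := I_{\min}$, since (iii) gives $\mathcal{A} = \{f \in C_b(S) : f \le I_{\min}\}$; (v) $\Rightarrow$ (ii), immediate from $f, g \le \gamma \Rightarrow f \vee g \le \gamma$; and (iv) $\Rightarrow$ (v), obtained by inserting $\phi^*(\mu) = \int I_{\min}\,d\mu$ into the Fenchel--Moreau representation and testing with Dirac measures to force $f \le I_{\min}$ pointwise.

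For (iii) $\Rightarrow$ (iv), convexity is automatic since $\phi$ is a supremum of affine functions. The bound $\phi^*(\mu) \le \int I_{\min}\,d\mu$ follows from the Fenchel--Young inequality $f(x) \le \phi(f) + I_{\min}(x)$. For the reverse, I would approximate the lower semi-continuous function $I_{\min}$ from below by an increasing sequence $g_n \uparrow I_{\min}$ of continuous bounded minorants (available on separable metric $S$); by (iii) each $g_n \in \mathcal{A}$, so $\int g_n\,d\mu \le \phi^*(\mu)$, and monotone convergence completes the estimate.

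The main obstacle is (ii) $\Rightarrow$ (iii). I would first invoke Proposition~\ref{prop:convex} and continuity from above to obtain, via Fenchel--Moreau and a Daniell-type tightness argument for separable $S$, the dual representation
\[\phi(f) = \max_{\mu \in \mathop{\rm ca}^+_1(S)} \Big\{ \int f\,d\mu - \phi^*(\mu)\Big\},\]
with attainment guaranteed by weak compactness of the sublevel sets of $\phi^*$. The key step is then to reduce this maximum to Dirac measures. Since $f - \phi(f) \in \mathcal{A}$, one has $\phi^*(\mu) = \sup_{h \in \mathcal{A}} \int h\,d\mu$; by (ii) this is a directed supremum, and $I_{\min} = \sup_{h \in \mathcal{A}} h$ is lower semi-continuous. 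Using a countable dense subset of $S$ together with upward directedness of $\mathcal{A}$, I would extract countably many $h_n \in \mathcal{A}$ with pointwise supremum $I_{\min}$ and apply monotone convergence to conclude $\phi^*(\mu) = \int I_{\min}\,d\mu$ for every Borel probability $\mu$. Substituting yields $\phi(f) = \max_\mu \int (f - I_{\min})\,d\mu = \max_{x \in S}\{f(x) - I_{\min}(x)\}$, the second equality because the integrand is pointwise bounded above by the scalar supremum and realized at Diracs.

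For compactness of the sublevel sets of $I_{\min}$, observe that $\{\delta_x : I_{\min}(x) \le c\} = \{\phi^* \le c\} \cap \{\delta_x : x \in S\}$ is a closed subset of the weakly compact set $\{\phi^* \le c\}$; since $x \mapsto \delta_x$ is a homeomorphism onto its image in the weak topology on probability measures, $\{I_{\min} \le c\}$ inherits compactness in $S$. This also justifies, a posteriori, that the supremum in (iii) is attained and hence a maximum. The trickiest step throughout is the directed interchange giving $\phi^*(\mu) = \int I_{\min}\,d\mu$, which hinges delicately on separability of $S$, lower semi-continuity of $I_{\min}$, and the directedness furnished by max-stability.
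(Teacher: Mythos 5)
Your overall architecture---convex duality via a Daniell--Stone type representation over $\mathrm{ca}^+_1(S)$, reduction of $\phi^\ast(\mu)$ to $\int I_{\min}\,\mathrm{d}\mu$ using upward directedness of $\mathcal{A}$, and compactness of the sublevel sets of $I_{\min}$ read off from the weakly compact sublevel sets of $\phi^\ast$ restricted to the closed set of Dirac measures---is essentially the paper's, only with the cycle rearranged: the paper proves $\mathrm{(i)}\Rightarrow\mathrm{(iv)}\Rightarrow\mathrm{(iii)}$ where you prove $\mathrm{(ii)}\Rightarrow\mathrm{(iii)}\Rightarrow\mathrm{(iv)}$, with the same two ingredients. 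Your remark that, once (iii) is in hand, $\phi^\ast(\mu)=\int I_{\min}\,\mathrm{d}\mu$ follows by approximating the lower semicontinuous $I_{\min}$ from below by continuous bounded minorants is a clean shortcut for that particular implication.

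There is, however, a genuine gap at the crux, namely the interchange $\sup_{h\in\mathcal{A}}\int h\,\mathrm{d}\mu=\int \sup_{h\in\mathcal{A}}h\,\mathrm{d}\mu$ inside your step $\mathrm{(ii)}\Rightarrow\mathrm{(iii)}$. You propose to extract, via a countable dense subset $D\subset S$, countably many $h_n\in\mathcal{A}$ whose pointwise supremum is $I_{\min}$. Such an extraction is impossible in general: a countable family chosen so that $\sup_n h_n=I_{\min}$ on $D$ may remain strictly below $I_{\min}$ off $D$. For instance, take $S=[0,1]$, $\phi(f)=\sup_x f(x)$, so $\mathcal{A}=\{f\in C_b(S)\colon f\le 0\}$ and $I_{\min}\equiv 0$; with $D=\mathbb{Q}\cap(0,1]$ and, for each $d\in D$, a function $h_d\le 0$ satisfying $h_d(d)=0$ and $h_d(0)=-1$, one gets $\sup_{d\in D}h_d(0)=-1<0=I_{\min}(0)$. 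If $\mu$ charges such an exceptional point, monotone convergence only gives $\int\sup_n h_n\,\mathrm{d}\mu<\int I_{\min}\,\mathrm{d}\mu$ and your lower estimate for $\phi^\ast(\mu)$ fails. The countable subfamily must be chosen depending on $\mu$, not merely on a dense subset of $S$. The paper's resolution is exactly this: fix $\mu$, take a version $\xi$ of $\mathrm{ess.sup}_\mu\mathcal{A}$ (a countable, hence by directedness monotone, supremum), and prove $\xi=I_{\min}$ $\mu$-a.e.\ by a Lindel\"of argument---if $\mu(\{\xi<q<I_{\min}\})>0$ for some rational $q$, second countability produces a point $x$ of that set all of whose neighbourhoods meet it in positive $\mu$-measure, and a single $f\in\mathcal{A}$ with $f(x)>q$ then exceeds $\xi$ on a set of positive measure by continuity, contradicting the definition of the essential supremum. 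You need this (or an equivalent) argument to close the step; separability, lower semicontinuity and directedness, applied as you describe, do not suffice.
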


\begin{proof} Before proving these equivalences, we show that 
\begin{equation}\label{eq:lem}	\phi^\ast(\delta_\cdot)={\rm ess.sup}_\mu \mathcal{A}\quad\mu\mbox{-almost surely}\end{equation}
	for all $\mu\in{\rm ca}^+_1(S)$, where ${\rm ess.sup}_\mu \mathcal{A}$  denotes the essential supremum of $\mathcal{A}$ w.r.t.~the probability measure $\mu$, see~\citep[Section A.5]{foellmer01}.
	To that end, fix $\mu\in {\rm ca}^+_1(S)$, and let $\xi$ be a measurable $\mu$-a.s.~representative of ${\rm ess.sup}_\mu\mathcal{A}$. 
	Since $\phi^\ast(\delta_\cdot)$ is lower semicontinuous, the function $\phi^\ast(\delta_\cdot)=\sup\mathcal{A}$ is a Borel measurable upper bound of $\mathcal{A}$.  
	Therefore, it holds $\xi\le\phi^\ast(\delta_\cdot)$ $\mu$-a.s..  
	By contradiction, suppose that $\mu(\xi<\phi^\ast(\delta_\cdot))>0$. 
	Then, there exists a rational $q\in\mathbb{Q}$ such that 
	$\mu(C)>0$, where $C:=\{\xi< q< \phi^\ast(\delta_\cdot)\}$. 
	Since $(S,d)$ is second-countable, 
	there exists $x\in S$ such that $\mu(C\cap U)>0$ for every open neighborhood $U$ of $x$.
	Fix $f\in\mathcal{A}$ with $q<f(x)\le \phi^\ast(\delta_x)$. 
	Note that such an $f$ exists because $\phi^\ast(\delta_x)=\sup_{f\in\mathcal{A}} f(x)$; see \eqref{eq:ratemin}. 
	Then, $\{q<f\}$ is an open neighborhood of $x$, and therefore $\mu(C\cap \{q<f\})>0$. 
	This shows that $\xi(y)<f(y)$ for all $y\in C\cap  \{q<f\}$, but this contradicts that $f\le\xi$ $\mu$-a.s. This shows \eqref{eq:lem}. 
	
$\textrm{(i)}\Leftrightarrow\textrm{(ii)}$: This follows from Lemma \ref{lem:max-stable0}.
	
	$\textrm{(i)} \Rightarrow \textrm{(iv)}$: 
	Since $\phi$ is max-stable it is convex by Proposition \ref{prop:convex}.
	Fix $\mu\in\mathop{\rm ca}^+_1(S)$. 
	On the one hand, since $\mathcal{A}$ is directed upwards there exists a sequence $(f_n)_{n\in\mathbb{N}}$ in $\mathcal{A}$ such that $0\le f_n\le f_{n+1}$ for all $n\in\mathbb{N}$ and
	\[f_n\to {\rm ess.sup}_\mu \mathcal{A}\quad \mu\mbox{-a.s..}\]  
	By \eqref{eq:lem} we have that ${\rm ess.sup}_\mu \mathcal{A}=\phi^\ast(\delta_\cdot)$ $\mu$-a.s.. 	
	Then, by monotone convergence, we obtain 
	\begin{align*}
	\phi^\ast(\mu)&=\sup_{f\in\mathcal{A}}\int f(x)\, \mu({\rm d}x)\geq \sup_{n\in\mathbb{N}}\int f_n(x)\, \mu({\rm d}x)=\int \phi^\ast(\delta_x)\, \mu({\rm d}x).
	\end{align*}
	On the other hand,  
	\[
	\phi^\ast(\mu)=\sup_{f\in\mathcal{A}}\int f(x) \,\mu({\rm d}x) \leq \int {\rm ess.sup}_\mu \mathcal{A} \, \mu({\rm d}x)=
	\int  \phi^\ast(\delta_x)\, \mu({\rm d}x).
	\]
	
	$\textrm{(iv)}\Rightarrow \textrm{(iii)}$: Fix $f\in C_b(S)$.
	By the non-linear version of the Daniell-Stone representation theorem (see \citep[Proposition 1.1]{cheridito2015representation}) we have
	\begin{align*}
	\phi(f)&=\max_{\mu\in\mathop{\rm ca}^+_1(S)}\Big\{ \int f(x)\,\mu({\rm d}x) - \phi^\ast(\mu) \Big\}\\
	&=\max_{\mu\in\mathop{\rm ca}^+_1(S)}\Big\{ \int f(x)\,\mu({\rm d}x) - \int \phi^\ast(\delta_x)\,\mu({\rm d}x) \Big\}\\
	&=\max_{\mu\in\mathop{\rm ca}^+_1(S)}\Big\{ \int \big(f(x)- \phi^\ast(\delta_x)\big)\,\mu({\rm d}x) \Big\}\\
	&=\max_{x\in S}\big\{ f(x) - \phi^\ast(\delta_x ) \big\}.
	\end{align*}
	Moreover, by the arguments in the proof of \citep[Theorem 2.2]{bartl2019robust} it follows that the sublevel sets $\{\mu\in \textnormal{ca}^1_+(S)\colon \phi^\ast(\mu)\le r\}$ are $\sigma(\textnormal{ca}^1_+(S),C_b(S))$-compact
	for all $r\in\mathbb{R}$. As a consequence, the function $S\to [0,+\infty]$, $x\mapsto \phi^\ast(\delta_x)$ has compact sublevel sets.   
	
	$\textrm{(iii)}\Rightarrow \textrm{(v)}$: From the dual representation we obtain \[\mathcal{A}=\{f\in C_b(S)\colon f(x)\leq\phi^\ast(\delta_x)\mbox{ for all } x\in S\}.\]  
	
	$\textrm{(v)}\Rightarrow\textrm{(ii)}$: 
	For $f,g\in\mathcal{A}$ it holds $f, g\leq \gamma$, so that 
	\[
	(f\vee g)(x)=f(x)\vee g(x)\leq \gamma(x)\quad\mbox{for all }x\in S.
	\]
	This shows that $f\vee g\in\mathcal{A}$.
\end{proof}

As a consequence, we get the following result. 
\begin{corollary}\label{cor:compact}
	Let $K\subset S$ be compact. Every max-stable monetary risk measure $\phi\colon C(K)\to\mathbb{R}$ has the representation
	$\phi(f)=\max_{x\in S} \{ f(x) - I_{\min}(x)\}$ for all $f\in C(K)$.\footnote{We denote by $C(K)$ the space of all continuous functions $f\colon K\to\mathbb{R}$.}
\end{corollary}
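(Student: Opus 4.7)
The plan is to reduce this to Theorem~\ref{thm:main1} applied with the ambient space being $K$ itself. Since $K$ is a compact metric space it is separable, so the hypothesis on the underlying metric space in Theorem~\ref{thm:main1} is met once we identify $K$ with $S$ in that theorem. Also, because $K$ is compact, $C(K) = C_b(K)$, so $\phi\colon C(K) \to \mathbb{R}$ is literally a monetary risk measure on $C_b(K)$ in the sense of the theorem.

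The only hypothesis that requires verification is continuity from above. Suppose $(f_n)_{n\in\mathbb{N}} \subset C(K)$ satisfies $f_n \downarrow 0$ pointwise on $K$. By Dini's theorem, applied to the monotone sequence of continuous functions on the compact set $K$, the convergence is in fact uniform, i.e.\ $\|f_n\|_\infty \downarrow 0$. On the other hand, any monetary risk measure is $1$-Lipschitz with respect to the sup-norm: from $f \le g + \|f-g\|_\infty$ and the properties (M), (T) we deduce $\phi(f) \le \phi(g) + \|f - g\|_\infty$, and by symmetry $|\phi(f)-\phi(g)| \le \|f-g\|_\infty$. Hence $|\phi(f_n)| = |\phi(f_n) - \phi(0)| \le \|f_n\|_\infty \to 0$, and since (M) together with $f_n \ge 0$ gives $\phi(f_n) \ge 0$, we conclude $\phi(f_n) \downarrow 0$.

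With continuity from above in hand, Theorem~\ref{thm:main1} applied to the separable metric space $K$ yields the representation $\phi(f) = \max_{x \in K} \{f(x) - I_{\min}(x)\}$ for every $f \in C(K)$, with $I_{\min}(x) = \sup_{g \in \mathcal{A}} g(x)$ for $x \in K$. To pass to the stated form $\max_{x \in S}$, extend $I_{\min}$ to all of $S$ by declaring $I_{\min}(x) := +\infty$ for $x \in S \setminus K$ (and, if one wishes to evaluate $f$ on $S$, extend $f$ arbitrarily off $K$); with this convention the supremum over $S \setminus K$ contributes $-\infty$, so the maximum over $S$ coincides with the maximum over $K$.

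The only place where any genuine work happens is the verification of continuity from above, and the entire content there is Dini's theorem combined with the automatic sup-norm Lipschitz continuity of $\phi$; everything else is a direct application of Theorem~\ref{thm:main1}.
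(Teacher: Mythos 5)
Your proposal is correct and follows exactly the paper's own (much terser) argument: compactness of $K$ gives separability and, via Dini's theorem together with the sup-norm Lipschitz continuity of monetary risk measures, continuity from above, after which Theorem~\ref{thm:main1} applies with $K$ in place of $S$. Your extra remark on reading $\max_{x\in S}$ as $\max_{x\in K}$ is a sensible clarification of what is essentially a typographical slip in the statement.
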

\begin{proof}
	Since every compact $K\subset S$ is separable, and $\phi$ is continuous from above by Dini's lemma, the statement follows from Theorem \ref{thm:main1}.
\end{proof}	
	
\begin{remark}\label{rem:Imin}
Based on standard arguments in convex duality, the function $I_{\rm min}$ is minimal among those functions $I\colon S\to[0,+\infty]$, for which $\phi$ admits the representation
\begin{equation}\label{eq:repArbRate}
	\phi(f)=\underset{x\in S}\sup\{f(x)-I(x)\}\quad\mbox{ for all }f\in C_b(S).
	\end{equation}
Also, if $\phi$ has the representation	\eqref{eq:repArbRate}, then $I$ can be replaced by $I_{\min}$. Indeed, for every $f\in C_b(S)$ and $x\in S$, it follows from~\eqref{eq:repArbRate} that 
	$I(x)\ge f(x)-\phi(f)$,   
	which shows that $I\ge I_{\rm min}$. 
	Hence, for $f\in C_b(S)$, we obtain 
	\[
	\phi(f)=\underset{x\in S}\sup\{f(x)-I(x)\}\le\underset{x\in S}\sup\{f(x)-I_{\rm min}(x)\}\le \phi(f), 
	\] 
	where the last inequality follows by definition of $I_{\rm min}$. 
\end{remark}
\subsection{Maxitive integral representation of max-stable monetary risk measures} 
  
Given a monetary risk measure $\phi\colon B_b(S)\to\R$, we define 
the \emph{concentration function} $J\colon\mathcal{B}(S)\to [-\infty,0]$ of the monetary risk measure $\phi$ by
\begin{equation}\label{eq:JA}
J_A:=\inf_{r\in \mathbb{R}}\phi(r 1_{A^c})\quad\mbox{for all }A\in \mathcal{B}(S).
\end{equation}
Directly from the definition, it holds $J_A\le J_{B}$ whenever $A\subset B$.  The concentration function is connected to the acceptance set $\mathcal{A}=\{f\in B_b(S)\colon\phi(f)\le 0\}$ as follows. 
\begin{lemma}
For every $A\in\mathcal{B}(S)$ and $s\in \R$, it holds
\begin{itemize}
\item[(i)] $s>-J_A$ implies that $s 1_A - r 1_{A^c}\notin\mathcal{A}$ for all $r>0$,
\item[(ii)] $s<-J_A$ implies that there exists $r>0$ such that $s 1_A - r 1_{A^c}\in\mathcal{A}$.
\end{itemize}
In particular, $-J_A=\sup\{s\ge 0 \colon \mbox{there exists } r>0 \mbox{ such that }s 1_A - r 1_{A^c}\in\mathcal{A}\}$.
\end{lemma}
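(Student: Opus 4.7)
The plan is to reduce everything to the algebraic identity $s 1_A - r 1_{A^c} = s - (s+r) 1_{A^c}$, which combined with translation invariance (T) gives the key formula
\[
\phi(s 1_A - r 1_{A^c}) = s + \phi(-(s+r) 1_{A^c}).
\]
Since by definition $J_A = \inf_{t \in \mathbb{R}} \phi(t 1_{A^c})$, the value $\phi(-(s+r) 1_{A^c})$ is just one of the quantities appearing in the infimum defining $J_A$. Both parts of the lemma then reduce to manipulating this single infimum.

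For part (i), the bound $\phi(-(s+r) 1_{A^c}) \geq J_A$ combined with the identity yields $\phi(s 1_A - r 1_{A^c}) \geq s + J_A$. The hypothesis $s > -J_A$ rewrites as $s + J_A > 0$, so the right-hand side is strictly positive and $s 1_A - r 1_{A^c} \notin \mathcal{A}$ for every $r > 0$.

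For part (ii), the hypothesis $s < -J_A$ rewrites as $-s > J_A$, so by definition of the infimum there exists $t_0 \in \mathbb{R}$ with $\phi(t_0 1_{A^c}) < -s$. Monotonicity (M) implies that $t \mapsto \phi(t 1_{A^c})$ is non-decreasing, so I may replace $t_0$ by any smaller number while keeping $\phi(t_0 1_{A^c}) < -s$; in particular I shrink $t_0$ far enough that $t_0 < -s$ holds as well. Then $r := -s - t_0 > 0$, and the identity yields $\phi(s 1_A - r 1_{A^c}) = s + \phi(t_0 1_{A^c}) < 0$, so $s 1_A - r 1_{A^c} \in \mathcal{A}$. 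The only mildly delicate point, which I expect to be the main (and minor) obstacle, is using monotonicity to reconcile the freedom in choosing $t_0$ with the positivity constraint $r > 0$.

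The displayed identity for $-J_A$ then follows immediately. Taking $r = 0$ in the defining infimum gives $J_A \leq \phi(0) = 0$, so $-J_A \geq 0$; part (ii) shows that every $s \in [0, -J_A)$ belongs to the set $\{s \geq 0 \colon \exists r>0,\ s 1_A - r 1_{A^c} \in \mathcal{A}\}$, while part (i) shows that no $s > -J_A$ does. These two sandwich the supremum and force it to equal $-J_A$, with the obvious convention $\sup = +\infty$ when $J_A = -\infty$ (in which case (i) is vacuous and (ii) covers every real $s$).
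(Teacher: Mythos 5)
Your proof is correct and follows essentially the same route as the paper: both rest on the identity $\phi(s1_A - r1_{A^c}) = s + \phi(-(s+r)1_{A^c})$ from translation invariance, with (i) bounding the right-hand side below by $s+J_A$ and (ii) extracting a suitable argument of the infimum defining $J_A$ (the paper phrases this as $\phi(s1_A - r1_{A^c})\downarrow J_A+s$ as $r\to+\infty$, which is the same observation).
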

\begin{proof}
For every $r> 0$, it holds
\begin{equation}
\label{eq:translationConc}
\phi(s 1_A - r 1_{A^c})=\phi(-(s+r)1_{A^c})+s\quad\mbox{for all }s\in\R.
\end{equation}

If $s>-J_A$, then it follows from \eqref{eq:translationConc} that 
$\phi(s 1_A - r 1_{A^c})\ge J_A + s>0$, and therefore $s 1_A - r 1_{A^c}\notin\mathcal{A}$.   
On the other hand, due to \eqref{eq:translationConc}, we have that $\phi(s 1_A - r 1_{A^c})\downarrow J_A+s$ as $r\to+\infty$. 
If $s<-J_A$, then $J_A+s<0$, and therefore $\phi(s 1_A - r 1_{A^c})\le 0$ for $r$ large enough. 
\end{proof}

\begin{lemma}\label{lem:max-stableJ}
Suppose that $\phi$ is max-stable.
 Then $J_{A_1\cup\ldots\cup A_N}\le J_{A_1}\vee\ldots\vee J_{A_N}$ for all  $A_1,\ldots,A_N\in\mathcal{B}(S)$. 
\end{lemma}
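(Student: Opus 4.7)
The plan is to reduce to the case $N=2$ by induction and prove that case by applying max-stability to a pointwise identity valid for $r\le 0$.

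First I would observe that monotonicity (M) implies that the map $r\mapsto \phi(r 1_{A^c})$ is non-decreasing in $r$ (since $r\le r'$ gives $r 1_{A^c}\le r' 1_{A^c}$). Consequently,
\[
J_A=\inf_{r\in\mathbb{R}}\phi(r 1_{A^c})=\lim_{r\to-\infty}\phi(r 1_{A^c}),
\]
so in the definition of $J_A$ one may restrict attention to $r\le 0$.

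Next, for $r\le 0$ and $A_1,A_2\in\mathcal B(S)$, I would verify the pointwise identity
\[
r 1_{(A_1\cup A_2)^c}=(r 1_{A_1^c})\vee(r 1_{A_2^c}).
\]
Indeed, at a point in $A_1^c\cap A_2^c$ both sides equal $r$; at a point outside $A_1^c\cap A_2^c$, at least one of $1_{A_i^c}$ vanishes, so at least one summand on the right is $0$, and since $r\le 0$ the maximum on the right is $0$, matching the left-hand side. Applying max-stability now yields
\[
\phi\bigl(r 1_{(A_1\cup A_2)^c}\bigr)\le \phi(r 1_{A_1^c})\vee\phi(r 1_{A_2^c}).
\]

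Letting $r\to -\infty$ on both sides, the left side tends to $J_{A_1\cup A_2}$. For the right side, I use that $r\mapsto \phi(r 1_{A_i^c})$ is non-decreasing for each $i$, hence so is $r\mapsto \phi(r 1_{A_1^c})\vee\phi(r 1_{A_2^c})$; since the binary maximum commutes with non-increasing limits (this being the only mildly delicate step—clear from monotonicity, with the usual care for $-\infty$ values), the limit is
\[
\Bigl(\lim_{r\to-\infty}\phi(r 1_{A_1^c})\Bigr)\vee \Bigl(\lim_{r\to-\infty}\phi(r 1_{A_2^c})\Bigr)=J_{A_1}\vee J_{A_2}.
\]
This proves the $N=2$ case, and a straightforward induction on $N$ (writing $A_1\cup\cdots\cup A_N=(A_1\cup\cdots\cup A_{N-1})\cup A_N$) gives the general statement. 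The only real content is the pointwise identity plus the observation that the infimum is a monotone limit; the remaining manipulations are routine.
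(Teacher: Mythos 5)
Your proof is correct and follows essentially the same route as the paper: the key step in both is the pointwise identity $r 1_{(A_1\cup\ldots\cup A_N)^c}=r1_{A_1^c}\vee\ldots\vee r1_{A_N^c}$ for $r\le 0$, followed by applying max-stability and letting $r\to-\infty$. The only cosmetic difference is that you reduce to $N=2$ and induct, while the paper handles all $N$ sets at once; your extra care about interchanging the maximum with the monotone limit is a fair point the paper leaves implicit.
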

\begin{proof}

For every $r\le 0$, it holds from max-stability that 
\[
\phi(r 1_{(A_1\cup\ldots\cup A_N)^c})=\phi(r 1_{A^c_1}\vee\ldots\vee r 1_{A^c_N})\le \phi(r 1_{A^c_1})\vee\ldots\vee \phi(r 1_{A^c_N}).
\]
Hence, by letting $r\to -\infty$ we obtain $J_{A_1\cup\ldots\cup A_N}\le J_{A_1}\vee\ldots\vee J_{A_N}$. 
\end{proof}

A function $\mu\colon\mathcal{B}(S)\to[-\infty,0]$ is called a \emph{penalty} if $\mu(\emptyset)=-\infty$, $\mu(S)=0$,  and $\mu(A)\le\mu(B)$ whenever $A\subset B$.  
A penalty $\mu$ is said to be max-stable if $\mu(A\cup B)\le \mu(A)\vee\mu(B)$. 
Given a max-stable penalty $\mu\colon\mathcal{B}(S)\to[-\infty,0]$, the \emph{maxitive integral} of $f\in B_b(S)$ with respect to $\mu$ is defined by
\begin{equation}\label{eq:maxitiveInt}
\int^X f\, {\rm d}\mu:=\underset{r\in \R}\sup\{r + \mu(\{f>r\})\}.
\end{equation} 
The maxitive integral~\eqref{eq:maxitiveInt} was introduced in~\citep{cattaneo}. 
It is closely related to the Maslov integral in idempotent analysis~\citep{maslov}, and can be obtain as a transformation of the Shilkret integral~\citep{shilkret}.  
Moreover, in the context of idempotent analysis, max-stable risk measures can be identified with `linear' functionals. 

The following result is an adaptation to the present setting of~\citep[Corollary 6]{cattaneo}. In contrast to \citep[Corollary 6]{cattaneo}, we assume max-stability on $B_b(S)$ rather than the space of all functions $f\colon S\to [-\infty,+\infty)$ which are bounded from above.
\begin{proposition}\label{prop:charactMaxSt}
Let $\phi\colon B_b(S)\to\R$ be a function.  
Then, the following conditions are equivalent:
\begin{itemize}
\item[(i)] $\phi$ is a max-stable monetary risk measure.
\item[(ii)] There exists a max-stable penalty $\mu\colon\mathcal{B}(S)\to[-\infty,0]$ such that
\[
\phi(f)=\int^X f \,{\rm d}\mu\quad\mbox{ for all }f\in B_b(S).
\]
\item[(iii)] The concentration function $J\colon\mathcal{B}(S)\to[-\infty,0]$ is a max-stable penalty  and  
\begin{equation}\label{eq:intRep}
\phi(f)=\int^X f \,{\rm d} J\quad\mbox{ for all }f\in B_b(S).
\end{equation}
\end{itemize}
\end{proposition}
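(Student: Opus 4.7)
The plan is to prove the cyclic chain $\textup{(iii)}\Rightarrow\textup{(ii)}\Rightarrow\textup{(i)}\Rightarrow\textup{(iii)}$. The first implication is immediate with $\mu=J$. For $\textup{(ii)}\Rightarrow\textup{(i)}$, I would check directly from the defining supremum \eqref{eq:maxitiveInt} that $f\mapsto\int^X f\,\textup{d}\mu$ is a max-stable monetary risk measure: normalisation uses that $\{0>r\}$ equals $S$ for $r<0$ and $\emptyset$ otherwise; translation is the change of variable $r\mapsto r-c$; monotonicity follows from $\{f>r\}\subseteq\{g>r\}$ when $f\leq g$; and max-stability uses the set identity $\{f\vee g>r\}=\{f>r\}\cup\{g>r\}$ combined with max-stability of $\mu$.

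The substance lies in $\textup{(i)}\Rightarrow\textup{(iii)}$. That $J$ is a max-stable penalty is quick: $J(\emptyset)=\inf_{r\in\R}\phi(r)=-\infty$, $J(S)=\phi(0)=0$, monotonicity of $J$ follows from monotonicity of $\phi$ applied to $r\,1_{A^c}$ with $r\leq 0$, and max-stability of $J$ is Lemma~\ref{lem:max-stableJ}. For the lower bound in $\phi(f)=\int^X f\,\textup{d}J$, fix $r\in\R$, set $A=\{f>r\}$, and observe that whenever $s\leq\inf f-r$ the pointwise inequality $r+s\,1_{A^c}\leq f$ holds on both $A$ and $A^c$; monotonicity and translation then yield $\phi(f)\geq r+\phi(s\,1_{A^c})$, and sending $s\to-\infty$ followed by the supremum over $r$ gives $\phi(f)\geq\int^X f\,\textup{d}J$.

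The upper bound is the crux and where max-stability of $\phi$ enters decisively. I would first reduce to $f\geq 0$ by the translation invariance of both sides, and then establish the indicator formula
\[
\phi(u\,1_A)=\max\{0,\,u+J_A\}\qquad\text{for all }u>0\text{ and }A\in\mathcal{B}(S).
\]
The $\leq$ direction rests on the pointwise identity $-u\,1_{A^c}\vee(-v)=-\min(u,v)\,1_{A^c}$ for $u,v>0$: specialising to $u>v$, max-stability yields $\phi(-v\,1_{A^c})\leq\phi(-u\,1_{A^c})\vee(-v)$, and then passing $u\to+\infty$ gives $\phi(-v\,1_{A^c})\leq\max\{J_A,-v\}$; translation then finishes the computation.

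With the indicator formula in hand, I would approximate $0\leq f\leq M$ from above by $f_N:=\max_{k=1}^N(kM/N)\,1_{B_k}$ where $B_k:=\{f>(k-1)M/N\}$, which satisfies $f\leq f_N\leq f+M/N$. Iterating max-stability of $\phi$ together with the indicator formula gives
\[
\phi(f)\leq\phi(f_N)\leq\max_{1\leq k\leq N}\max\bigl\{0,\,kM/N+J_{B_k}\bigr\}\leq M/N+\int^X f\,\textup{d}J,
\]
where the last inequality collects $kM/N=M/N+(k-1)M/N$ and uses that $\int^X f\,\textup{d}J\geq 0$ for $f\geq 0$ (clear from $\{f>r\}=S$ for $r<0$). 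Letting $N\to\infty$ completes the proof.
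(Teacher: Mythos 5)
Your proof is correct and follows essentially the same route as the paper: both hinge on Lemma \ref{lem:max-stableJ} and on decomposing $f$ along its superlevel sets so that max-stability reduces $\phi(f)$ to the quantities $r+J_{\{f>r\}}$. The only cosmetic difference is that you split the representation into two one-sided bounds (the lower one by a direct pointwise comparison, the upper one by approximating $f$ from above by maxima of scaled indicators), whereas the paper proves exact equality for simple functions first and then invokes the Lipschitz continuity of $\phi$.
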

\begin{proof}
$(iii)\Rightarrow(ii)$: The implication is obvious.

$(ii)\Rightarrow(i)$: This follows directly by definition of the  maxitive integral and since the penalty $\mu$ is max-stable.

$(i)\Rightarrow(iii)$: The concentration $J$ is a penalty, which is max-stable by  Lemma~\ref{lem:max-stableJ}. To show \eqref{eq:intRep}, we first assume that $f$ is simple, that is, $f=\sum_{i=1}^N \alpha_i f^{-1}(\alpha_i)$ with $\alpha_1<\alpha_2<\ldots<\alpha_N$.  
Then, we have
\[
f=\underset{1\le i\le N}\vee \left\{\alpha_i 1_{\{f> \alpha_{i-1}\}} + r 1_{\{f\le \alpha_{i-1}\}} \right\}   
\]
for $r\in\mathbb{R}$ small enough, where we define $\alpha_{0}:=-\infty$. 
Since $\phi$ is max-stable, it follows that
$$\phi(f)=\underset{1\le i\le N}\vee \phi\left(\alpha_i 1_{\{f> \alpha_{i-1}\}} + r 1_{\{f\le \alpha_{i-1}\}} \right)=\underset{1\le i\le N}\vee\left\{\alpha_{i} + \phi((r-\alpha_i) 1_{\{f\le\alpha_{i-1}\}}) \right\}.   
$$
Letting $r\to-\infty$, we get
\begin{align*}
\phi(f)&=\underset{1\le i\le N}\vee\big\{\alpha_i + J_{\{f> \alpha_{i-1}\}} \big\}=\underset{1\le i\le N}\vee \underset{r\in [\alpha_{i-1},\alpha_i)}\sup\big\{r + J_{\{f>r\}} \big\}\\&=\underset{r\in \R}\sup\big\{r + J_{\{f>r\}} \big\}
=\int^X f \,{\rm d} J.
\end{align*}
Finally, for general $f\in B_b(S)$, the statement follows by approximating
$f$ with simple functions, and since $\phi$ is Lipschitz continuous by the monotonicity (M) and the translation property (T).  
\end{proof}
\begin{remark}
Suppose that $\phi_1,\phi_2\colon B_b(S)\to\R$ are max-stable monetary risk measures with respective concentration functions $J_1,J_2$. 
If $J_1(A)=J_2(A)$ for all $A\in\mathcal{B}(S)$, then it follows from~\eqref{eq:intRep} in  Proposition~\ref{prop:charactMaxSt} that $\phi_1(f)=\phi_2(f)$ for all $f\in B_b(S)$. 
\end{remark} 
Notice that the results in this subsection hold for general topological spaces $S$.

\subsection{Examples}\label{ex:maxSt}

	We provide some examples of 
	max-stable monetary risk measures.
\begin{itemize}
\item[(a)] Let  $\gamma\colon S\to [0,+\infty]$ be a function that is not identically $+\infty$. Then, 
\[
\phi\colon B_b(S)\to \R,\quad \phi(f):=\underset{x\in S}\sup\{f(x)-\gamma(x)\}
\]
is a max-stable monetary risk measure.
\item[(b)] Let $(\phi_i)$ be a family of max-stable monetary risk measures on $B_b(S)$. 
Then, $\vee_i\phi_i$ is a max-stable monetary risk measure on $B_b(S)$.
\item[(c)] 
Let $(X_n)_{n\in\mathbb{N}}$ be a sequence of $S$-valued random variables defined on a probability space $(\Omega,\mathcal{F},\mathbb{P})$. 
The \emph{upper asymptotic entropy} 
\[
\overline{\phi}_{\rm ent}(f):=\limsup_{n\to\infty}\frac{1}{n}\log\mathbb{E}[\exp(n f(X_n))]
\]
is a max-stable monetary risk measure on $B_b(S)$.  
Indeed, it follows directly from the definition
that $\overline{\phi}_{\rm ent}$ is a monetary risk measure. As for the max-stability, for $f,g\in B_b(S)$ and $A_n:=\{f(X_n)\ge g(X_n)\}$, it holds
\begin{align*}
&\overline{\phi}_{\rm ent}(f\vee g)=\limsup_{n\to\infty} \frac{1}{n} \log \Big(\mathbb{E}\big[\exp(n f(X_n)\vee n g(X_n)\big]\Big)\\ =&\limsup_{n\to\infty} \frac{1}{n} \log \Big(\mathbb{E}\big[\exp(n f(X_n))1_{A_n}\big]+ \mathbb{E}\big[\exp(n g(X_n))1_{A_n^c}\big]\Big)\\
\le& \limsup_{n\to\infty} \frac{1}{n} \log \mathbb{E}\big[\exp(n f(X_n))1_{A_n}\big]\vee \limsup_{n\to\infty} \frac{1}{n} \log  \mathbb{E}\big[\exp(n g(X_n))1_{A_n^c}\big]\\
\le& \overline{\phi}_{\rm ent}(f)\vee \overline{\phi}_{\rm ent}(g).	
\end{align*}	
Along the same line of argumentation, one can show that the \emph{lower asymptotic entropy} 
$\underline{\phi}_{\rm ent}(f):=\liminf_{n\to\infty}\frac{1}{n}\log\mathbb{E}[\exp(n f(X_n))]$ is a max-stable monetary risk measure on $B_b(S)$.

\item[(d)] In the context of the previous example, for a non-empty family $\mathcal{P}$ of probability measures on $\mathcal{F}$, we define the 
\emph{upper} and \emph{lower robust asymptotic entropies} by
\[
\limsup_{n\to\infty}\frac{1}{n}\log\big(\sup_{\mathbb{P}\in\mathcal{P}}\mathbb{E}_{\mathbb{P}}[\exp(n f(X_n))]\big)\quad\mbox{and}\quad
\liminf_{n\to\infty}\frac{1}{n}\log\big(\sup_{\mathbb{P}\in\mathcal{P}}\mathbb{E}_{\mathbb{P}}[\exp(n f(X_n))]\big)
\]
for all $f\in B_b(S)$. By the same arguments as in the previous example, it follows that the upper/lower robust asymptotic entropy is a max-stable monetary risk measure on $B_b(S)$. Thereby, it should be noted that the nonlinear expectation $\sup_{\mathbb{P}\in\mathcal{P}}\mathbb{E}_{\mathbb{P}}[\cdot]$ is subadditive. For instance, $\mathcal{P}$ could be modeled as the Wasserstein ball $\mathcal{P}=\{\mathbb{P}\colon \mathcal{W}(\mathbb{P},\mathbb{P}_0)\le c\}$ consisting of all probability measures $\mathbb{P}$ which in some Wasserstein distance $\mathcal{W}$ are close to some reference probability measure $\mathbb{P}_0$.
\end{itemize}

\section{Large deviations built on max-stable monetary risk measures}\label{sec:LDP}

Throughout this section, let $\phi\colon B_b(S)\to\mathbb{R}$ be a monetary risk measure. 
We will provide sufficient conditions such that the monetary risk measure $\phi$ satisfies the \emph{Laplace principle (LP)} with \emph{rate function}\footnote{That is, a lower semicontinuous function $I\colon S\to[0,+\infty]$, which is not identically $+\infty$.} $I$, i.e., $\phi$ has the dual representation
\begin{equation}\label{eq:representation}
\phi(f)=\underset{x\in S}\sup\{f(x)-I(x)\}\quad\mbox{ for all }f\in C_b(S).
\end{equation}
We define the  \emph{minimal rate function} $I_{\min}\colon S\to[0,+\infty]$ of the monetary risk measure $\phi$ by 
\begin{equation}\label{eq:Imin}
I_{\rm min}(x):=(\phi|_{C_b(S)})^\ast(\delta_x)=\underset{f\in C_b(S)}\sup\{f(x)-\phi(f)\},
\end{equation}
and the \emph{concentration function} $J\colon\mathcal{B}(S)\to [-\infty,0]$ is defined as in \eqref{eq:JA}.
 
The following result is an analogue of the probabilistic representation of the rate function in large deviation theory, see e.g.~\citep[Remark 2.1]{dinwoodie} and \citep[Theorem 4.1.18]{zeitouni1998large}. 
\begin{proposition}\label{prop:ratefunction}
For every $x\in S$, it holds
	\[
	-I_{\rm min}(x)=\lim_{\delta\downarrow 0}J_{B_\delta(x)},
	\]
	where $B_\delta(x):=\{y\in S\colon d(x,y)<\delta\}$. 	
\end{proposition}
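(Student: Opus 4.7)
The target identity is $-I_{\min}(x)=\lim_{\delta\downarrow 0}J_{B_\delta(x)}$. My first observation is that, since $\delta\mapsto B_\delta(x)$ is increasing in $\delta$ and $J$ is monotone in set inclusion, the family $\delta\mapsto J_{B_\delta(x)}$ is monotone in $\delta$, so the limit as $\delta\downarrow 0$ exists in $[-\infty,0]$ and equals $\inf_{\delta>0}J_{B_\delta(x)}$. I then plan to establish the equality by proving the two inequalities separately, using on one side the local continuity of test functions at $x$, and on the other side a Urysohn-type continuous approximation of $r\mathbf 1_{B_\delta(x)^c}$ from below.

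For the upper bound $\lim_{\delta\downarrow 0}J_{B_\delta(x)}\le -I_{\min}(x)$, fix $f\in C_b(S)$ and $\epsilon>0$, set $M:=\|f\|_\infty$, and pick $\delta_0>0$ such that $f(y)\ge f(x)-\epsilon$ for all $y\in B_{\delta_0}(x)$. Then for any $\delta\le \delta_0$,
\[
f \;\ge\; (f(x)-\epsilon)\mathbf 1_{B_\delta(x)}+(-M)\mathbf 1_{B_\delta(x)^c}\;=\;(f(x)-\epsilon)+r\mathbf 1_{B_\delta(x)^c}
\]
with $r:=-M-f(x)+\epsilon$. Applying monotonicity (M) and translation (T) yields $\phi(f)\ge f(x)-\epsilon+\phi(r\mathbf 1_{B_\delta(x)^c})\ge f(x)-\epsilon+J_{B_\delta(x)}$, so $J_{B_\delta(x)}\le \phi(f)-f(x)+\epsilon$. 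Sending $\delta\downarrow 0$ and then $\epsilon\downarrow 0$, and finally taking the supremum over $f\in C_b(S)$, produces the desired upper bound via the definition \eqref{eq:Imin} of $I_{\min}$.

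For the lower bound $\lim_{\delta\downarrow 0}J_{B_\delta(x)}\ge -I_{\min}(x)$, I note first that $r\mathbf 1_{B_\delta(x)^c}$ is pointwise non-increasing in $r$, so by monotonicity $J_{B_\delta(x)}=\inf_{r\le 0}\phi(r\mathbf 1_{B_\delta(x)^c})$. Given any $r\le 0$ and $\delta>0$, define the continuous bounded function $f(y):=r\min(1,d(y,x)/\delta)$. Then $f(x)=0$, $f\le 0$ everywhere, and $f(y)=r$ for $y\notin B_\delta(x)$, so $f\le r\mathbf 1_{B_\delta(x)^c}$ pointwise. Monotonicity of $\phi$ gives $\phi(f)\le \phi(r\mathbf 1_{B_\delta(x)^c})$, whence $-\phi(r\mathbf 1_{B_\delta(x)^c})\le f(x)-\phi(f)\le I_{\min}(x)$. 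Taking the supremum over $r\le 0$ (equivalently the infimum of $\phi(r\mathbf 1_{B_\delta(x)^c})$) gives $-J_{B_\delta(x)}\le I_{\min}(x)$, i.e.\ $J_{B_\delta(x)}\ge -I_{\min}(x)$, which is in fact uniform in $\delta$ and hence survives the limit.

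There is no real obstacle here; the only subtle point is making sure the continuous approximation $f$ in the lower-bound step lies below $r\mathbf 1_{B_\delta(x)^c}$ while still satisfying $f(x)=0$, which is precisely what the scaling by the distance function $d(\cdot,x)/\delta$ accomplishes. Combining the two inequalities closes the proof.
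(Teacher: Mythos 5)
Your proof is correct and follows essentially the same two-sided argument as the paper: one inequality via a continuous function that vanishes at $x$ and dominates from below the indicator payoff $r\mathbf 1_{B_\delta(x)^c}$ (you build the Urysohn function explicitly as $r\min(1,d(\cdot,x)/\delta)$, where the paper simply invokes Urysohn's lemma), and the other via the local continuity of a test function $f$ at $x$ combined with monotonicity and translation invariance. No gaps; the minor bookkeeping points (monotonicity of $\delta\mapsto J_{B_\delta(x)}$, restricting to $r\le 0$, using a single $r$ versus the infimum) are all handled correctly.
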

\begin{proof}
	Let $x\in S$. Fix $r\le 0$ and $\delta>0$. 
	By Urysohn's lemma
	there exists a continuous function $f_{r,\delta}:S\to [r,0]$ with $f_{r,\delta}(x)=0$ and $B_\delta(x)^c\subset f_{r,\delta}^{-1}(r)$. Then, we have
	\[
	I_{\rm min}(x)\ge -\phi(f_{r,\delta})\ge -\phi(r 1_{B_\delta(x)^c}),
	\]
	and therefore
	\[
	I_{\rm min}(x)\ge \sup_{r\ge 0}-\phi(r 1_{B_\delta(x)^c})= -J_{B_\delta(x)},
	\]
	which shows that $-I_{\rm min}(x)\le\lim_{\delta\downarrow 0}J_{B_\delta(x)}$. 
	
	As for the other inequality, let $f\in C_b(S)$.
	For every $\varepsilon>0$ there exists $\delta>0$ so that $f(z)\geq f(x)-\varepsilon$ for all $z\in B_\delta(x)$. Then, for $r$ large enough, it holds
	\begin{align*}
	\phi(f)&\ge \phi( (f(x)-\varepsilon)1_{B_\delta(x)} - r 1_{B_\delta(x)^c} )\\&=
	\phi( f(x)-\varepsilon - (r + f(x)+\varepsilon)  1_{B_\delta(x)^c} )\\
	&=f(x)-\varepsilon + \phi(-(r + f(x)+\varepsilon)  1_{B_\delta(x)^c}).
	\end{align*}
	By taking the infimum over $r>0$, we obtain
	\[
	\phi(f)\ge f(x)-\varepsilon + J_{B_\delta(x)}.
	\]
	Since $\varepsilon>0$ was arbitrary, we get
	\[
	\phi(f)\ge f(x) + \lim_{\delta \downarrow 0}J_{B_\delta(x)}.
	\]
	This shows that 
	\[
	-\lim_{\delta \downarrow 0}J_{B_\delta(x)}\ge  f(x) - \phi(f)
	\]
	for all $f\in C_b(S)$, and therefore
	$\lim_{\delta \downarrow 0}J_{B_\delta(x)}\le -I_{\rm min}(x)$. The proof is complete.
\end{proof}

\begin{corollary}\label{cor:LDPlower}
It holds
\begin{equation}\label{LDP1}
-\inf_{x\in A}I_{\rm min}(x)\le J_A\quad\mbox{for all  }A\subset S\mbox{ open}.
\end{equation}
If in addition, $\phi$ is max-stable, then 
\begin{equation}\label{LDP1compact}
J_K\le -\inf_{x\in K}I_{\rm min}(x)\quad\mbox{ for all }K\subset S\mbox{ compact.}
\end{equation}
\end{corollary}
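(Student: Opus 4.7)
My plan is to prove the two inequalities separately, with the first being an almost immediate consequence of Proposition~\ref{prop:ratefunction} and the monotonicity of $J$, and the second requiring a compactness argument that is where max-stability enters through Lemma~\ref{lem:max-stableJ}.

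For the lower bound \eqref{LDP1}, I would fix an open set $A$ and a point $x\in A$. Since $A$ is open, there is some $\delta_0>0$ with $B_{\delta_0}(x)\subset A$, so by monotonicity of $J$ in its argument (which follows directly from the definition of $J$ and the monotonicity of $\phi$), $J_{B_\delta(x)}\le J_A$ for all $0<\delta\le \delta_0$. Sending $\delta\downarrow 0$ and applying Proposition~\ref{prop:ratefunction} yields $-I_{\rm min}(x)\le J_A$. Taking the supremum over $x\in A$ (equivalently, the negative infimum of $I_{\rm min}$) gives \eqref{LDP1}.

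For the upper bound \eqref{LDP1compact} on compacts, I would set $M:=\inf_{x\in K}I_{\rm min}(x)$ and aim to show $J_K\le -M+\varepsilon$ for an arbitrary $\varepsilon>0$ (handling $M=+\infty$ in parallel by replacing $-M+\varepsilon$ with an arbitrary $-r$). For each $x\in K$, Proposition~\ref{prop:ratefunction} provides a radius $\delta_x>0$ with $J_{B_{\delta_x}(x)}\le -I_{\rm min}(x)+\varepsilon\le -M+\varepsilon$, using $I_{\rm min}(x)\ge M$. The open cover $\{B_{\delta_x}(x)\}_{x\in K}$ admits a finite subcover $B_{\delta_{x_1}}(x_1),\ldots,B_{\delta_{x_N}}(x_N)$ since $K$ is compact. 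Monotonicity of $J$ together with Lemma~\ref{lem:max-stableJ} (which requires max-stability of $\phi$) then yields
\[
J_K\le J_{B_{\delta_{x_1}}(x_1)\cup\cdots\cup B_{\delta_{x_N}}(x_N)}\le \bigvee_{i=1}^N J_{B_{\delta_{x_i}}(x_i)}\le -M+\varepsilon.
\]
Letting $\varepsilon\downarrow 0$ (or $r\to\infty$ in the $M=+\infty$ case) completes the argument.

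The only mildly delicate point is the degenerate case $I_{\rm min}(x)=+\infty$, where the bound $J_{B_{\delta_x}(x)}\le -I_{\rm min}(x)+\varepsilon$ has to be read as the ability to make $J_{B_{\delta_x}(x)}$ arbitrarily negative, so one should either treat the case $M=+\infty$ separately or uniformly say ``choose $\delta_x$ so that $J_{B_{\delta_x}(x)}\le\max(-I_{\rm min}(x),-1/\varepsilon)+\varepsilon$''. Beyond that bookkeeping, the main structural ingredient is the compactness-plus-max-stability step, since without Lemma~\ref{lem:max-stableJ} one cannot pass from the pointwise bounds on the small balls to a bound on $K$ itself.
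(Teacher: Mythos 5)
Your proof is correct and follows essentially the same route as the paper: the lower bound \eqref{LDP1} is obtained from Proposition~\ref{prop:ratefunction} together with the monotonicity of $J$, and the upper bound \eqref{LDP1compact} from a finite subcover of $K$ combined with Lemma~\ref{lem:max-stableJ}. Your truncation device for the case $I_{\rm min}(x)=+\infty$ is the same bookkeeping the paper performs via the bound $-J_{B_{\delta_i}(x_i)}\ge (I_{\rm min}(x_i)-\varepsilon)\wedge \varepsilon^{-1}$.
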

\begin{proof}
Suppose that $A\subset S$ is open. 
For every $x\in A$ there exists $\delta_0>0$ such that $B_{\delta_0}(x)\subset A$. 
Therefore, by Proposition~\ref{prop:ratefunction} we have
\[
-I_{\rm min}(x)=\lim_{\delta\downarrow 0}J_{B_\delta(x)}\le J_{B_{\delta_0}(x)}\le  J_{A}.
\] 
By taking the supremum over all $x\in A$ we get \eqref{LDP1}.  

Suppose now that $\phi$ is max-stable and $K\subset S$ is compact.  
Given $\varepsilon>0$, due to Proposition \ref{prop:ratefunction} and by compactness there exist $x_1,\ldots,x_N\in K$ and $\delta_1,\ldots,\delta_N>0$ such that $K\subset\bigcup_{i=1}^N B_{\delta_i}(x_i)$ and
\[
-J_{B_{\delta_i}(x_i)}\ge (I_{\rm min}(x_i)-\varepsilon)\wedge \varepsilon^{-1}\quad\mbox{ for }i=1,\ldots,N.
\]
Then, due to Lemma~\ref{lem:max-stableJ}, 
\[
-J_K\ge \wedge_{1\le i\le N} (-J_{B_{\delta_i}(x_i)}) \ge \wedge_{1\le i\le N} (I_{\rm min}(x_i)-\varepsilon)\wedge\varepsilon^{-1}\ge \left(\underset{x\in K}\inf I_{\rm min}(x) - \varepsilon\right)\wedge \varepsilon^{-1}. 
\]
Then, \eqref{LDP1compact} follows as $\varepsilon>0$ was arbitrary.
\end{proof}

\begin{definition}
We say that the monetary risk measure $\phi$ satisfies the large deviation principle (LDP) with rate function $I$, if 
\begin{equation}\label{LDP}
-\inf_{x\in {\rm int}(A)}I(x) \le J_{A}\le -\inf_{x\in {\rm cl}(A)}I(x)\quad \mbox{for all }A\in \mathcal{B}(S).
\end{equation}	
\end{definition}
By Corollary~\ref{cor:LDPlower}, the lower bound in~\eqref{LDP} is always satisfied for the minimal rate function $I_{\rm min}$, and if in addition 
$\phi$ is max-stable, then the upper bound in~\eqref{LDP} holds for the minimal rate function $I_{\rm min}$ whenever $A$ is relatively compact. 

The following result shows that the LDP uniquely determines the rate function $I$. Namely, whenever $\phi$ satisfies the LDP with rate function $I$, it necessarily holds $I=I_{\rm min}$.

\begin{proposition}\label{prop:uniquenessrate} 
If $J_{A}\le -\inf_{x\in A} {I}(x)$ for every closed set $A\subset S$, then  $I_{\rm min}\ge I$. Analogously, if $-\inf_{x\in A}{I}(x)\le J_{A}$ for every open set  $A\subset S$, then $I_{\rm min}\le I$.
In particular, if $\phi$ satisfies the LDP with rate function $I$, then $I=I_{\rm min}$.    
\end{proposition}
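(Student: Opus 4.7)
The plan is to leverage Proposition~\ref{prop:ratefunction}, which already expresses the minimal rate function pointwise as $-I_{\min}(x)=\lim_{\delta\downarrow 0}J_{B_\delta(x)}$, and simply feed balls around $x$ into the two one-sided inequalities assumed in the statement. Since each inequality involves an arbitrary closed (resp.\ open) set, taking the ball itself (closed or open) and passing to the limit as $\delta\downarrow 0$ will yield the desired bound on $I_{\min}(x)$.

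For the first claim, I fix $x\in S$ and consider the closed ball $\overline{B_\delta(x)}:=\{y\in S\colon d(x,y)\le\delta\}$, which contains the open ball $B_\delta(x)$. Monotonicity of $J$ gives
\[
J_{B_\delta(x)}\le J_{\overline{B_\delta(x)}},
\]
and since $\overline{B_\delta(x)}$ is closed and contains $x$, the assumption $J_A\le -\inf_{y\in A}I(y)$ for closed $A$ yields
\[
J_{\overline{B_\delta(x)}}\le -\inf_{y\in\overline{B_\delta(x)}}I(y)\le -I(x).
\]
Letting $\delta\downarrow 0$ and invoking Proposition~\ref{prop:ratefunction} then gives $-I_{\min}(x)\le -I(x)$, i.e.\ $I_{\min}(x)\ge I(x)$.

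For the second claim, I again fix $x\in S$ and use the open ball $B_\delta(x)$, which contains $x$. The assumption $-\inf_{y\in A}I(y)\le J_A$ applied to the open set $B_\delta(x)$ gives
\[
-I(x)\le -\inf_{y\in B_\delta(x)}I(y)\le J_{B_\delta(x)}.
\]
Passing to the limit $\delta\downarrow 0$ and applying Proposition~\ref{prop:ratefunction} once more produces $-I(x)\le -I_{\min}(x)$, i.e.\ $I_{\min}(x)\le I(x)$. The ``in particular'' statement follows immediately: under the LDP both hypotheses hold simultaneously, so the two inequalities combine to $I=I_{\min}$.

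I do not anticipate any real obstacle: the proof is essentially a one-line consequence of Proposition~\ref{prop:ratefunction} in each direction. The only minor care point is choosing the \emph{right} ball in each case — the closed ball for the upper bound (so that the hypothesis on closed sets applies directly) and the open ball for the lower bound (so that the hypothesis on open sets applies directly) — together with the trivial observation that $x$ belongs to both, which suffices to discard the infimum in favour of the value $I(x)$.
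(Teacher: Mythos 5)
Your second half (the open-set direction, giving $I_{\min}\le I$) is correct and matches the intended argument. But the first half contains a genuine sign error. You write
\[
J_{\overline{B_\delta(x)}}\le -\inf_{y\in\overline{B_\delta(x)}}I(y)\le -I(x),
\]
and justify the last step by the fact that $x\in\overline{B_\delta(x)}$. That membership gives $\inf_{y\in\overline{B_\delta(x)}}I(y)\le I(x)$, hence $-\inf_{y\in\overline{B_\delta(x)}}I(y)\ge -I(x)$ --- the inequality points the \emph{wrong} way for what you need. Discarding the infimum in favour of $I(x)$ only works when you need an \emph{upper} bound on the infimum (as in your open-set direction); here you need a \emph{lower} bound on $\inf_{\overline{B_\delta(x)}}I$ in order to get an upper bound on $-\inf_{\overline{B_\delta(x)}}I$, and membership of $x$ alone cannot provide that, since $I$ could a priori dip far below $I(x)$ arbitrarily close to $x$.

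The missing ingredient is the lower semicontinuity of the rate function $I$, which is exactly what the paper's proof invokes: since $I$ is lower semicontinuous,
\[
\lim_{\delta\downarrow 0}\,\inf_{y\in \overline{B_\delta(x)}}I(y)=I(x),
\]
so passing to the limit in $J_{B_\delta(x)}\le J_{\overline{B_\delta(x)}}\le -\inf_{y\in\overline{B_\delta(x)}}I(y)$ and using Proposition~\ref{prop:ratefunction} yields $-I_{\min}(x)\le -I(x)$. Without lower semicontinuity the claim $I_{\min}\ge I$ is actually false (take $I$ with a downward discontinuity at $x$), so this is not a cosmetic fix but the essential point of that direction. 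Everything else in your write-up, including the open-set direction and the deduction of $I=I_{\min}$ under the LDP, is fine.
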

\begin{proof}
Suppose, for instance, that $J_{A}\le -\inf_{x\in A} I(x)$ for every $A\subset S$ closed. 
Fix $x_0\in S$ and for every $\delta>0$ consider the closed ball $\bar{B}_\delta(x_0):=\{x\in S\colon d(x_0,x)\le \delta\}$. 
Then, since $I$ is lower semicontinuous,  it holds 
$$I(x_0)=\underset{\delta \downarrow 0}\lim\underset{x\in B_\delta(x_0)}\inf I(x)=\underset{\delta \downarrow 0}\lim\underset{x\in \bar{B}_\delta(x_0)}\inf I(x)\le -\underset{\delta \downarrow 0}\lim J_{\bar{B}_\delta(x_0)}=-\underset{\delta \downarrow 0}\lim J_{{B}_\delta(x_0)}=I_{\rm min}(x_0),$$ 
where the last equality follows from Proposition~\ref{prop:ratefunction}. 
\end{proof}	

The following proposition shows that the LP in  \eqref{eq:representation} is a sufficient condition for the LDP, generalizing the well-known fact in large deviations due to Bryc~\citep{bryc1990large}. 
In particular, we have that $I_{\rm min}$ is not only the minimal rate function, but also the unique possible rate function to obtain a representation \eqref{eq:representation}.  
We would like to emphasize that, in the following proposition, $I$ is not assumed to have compact sublevel sets.  
The proof is an adaptation of the proof of~\citep[Theorem 4.4.13]{zeitouni1998large}, where it is shown   that the LP implies the LDP for random variables with values in a completely regular topological space. 
However, \citep[Theorem 4.4.13]{zeitouni1998large} assumes that the rate function has compact sublevel sets to deal with the technical difficulties of working with a general completely regular topological space.

\begin{proposition}\label{prop:LPimpliesLDP}
If $\phi\colon B_b(S)\to\mathbb{R}$ has the representation
\begin{equation}\label{rep2}
\phi(f)=\sup_{x\in S}\{f(x)-I(x)\}
\end{equation}
for all $f\in C_b(S)$, then $\phi$ satisfies the LDP with rate function~$I$ and $I=I_{\rm min}$.
\end{proposition}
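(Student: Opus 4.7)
The plan is to establish the two LDP bounds separately and then invoke Proposition~\ref{prop:uniquenessrate} to conclude $I = I_{\min}$. First, I would note that Remark~\ref{rem:Imin} already forces $I \geq I_{\min}$, so combining this with Corollary~\ref{cor:LDPlower} applied to the open set ${\rm int}(A)$ and the monotonicity $J_{{\rm int}(A)} \leq J_A$ yields at once the lower bound $-\inf_{x \in {\rm int}(A)} I(x) \leq -\inf_{x \in {\rm int}(A)} I_{\min}(x) \leq J_{{\rm int}(A)} \leq J_A$ for every Borel set $A \subset S$.

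The main work is the upper bound $J_F \leq -\inf_F I$ for closed $F$. Setting $\alpha := \inf_F I$, the case $\alpha = 0$ is immediate since $J_F \leq \phi(0) = 0$. Assuming $\alpha > 0$, I would fix $\epsilon \in (0, \alpha)$ and observe that $U := \{x \in S : I(x) > \alpha - \epsilon\}$ is open (by lower semicontinuity of $I$) and contains $F$. Since $F$ and $U^c$ are disjoint closed subsets of the metric space $S$, the continuous function
\[
u(x) := \frac{d(x, U^c)}{d(x, F) + d(x, U^c)} \in [0, 1]
\]
is well-defined (the denominator never vanishes as $F \cap U^c = \emptyset$) and separates them, with $u \equiv 1$ on $F$ and $u \equiv 0$ on $U^c$. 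For $r \leq -\alpha + \epsilon$, I would set $g_r := r(1 - u) \in C_b(S)$. A direct pointwise check gives $g_r \geq r 1_{F^c}$ (both sides vanish on $F$, and on $F^c$ one has $g_r \in [r,0]$ while $r 1_{F^c} = r$), and the representation yields
\[
\phi(g_r) = \sup_{x \in S} \bigl\{g_r(x) - I(x)\bigr\} \leq -\alpha + \epsilon,
\]
because on $U$ one has $g_r(x) - I(x) \leq -I(x) < -\alpha + \epsilon$ (using $I > \alpha - \epsilon$ and $g_r \leq 0$), while on $U^c$ one has $g_r(x) = r$ and hence $g_r(x) - I(x) \leq r \leq -\alpha + \epsilon$. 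Monotonicity then gives $\phi(r 1_{F^c}) \leq \phi(g_r) \leq -\alpha + \epsilon$, and passing to the infimum over $r$ and letting $\epsilon \downarrow 0$ yields $J_F \leq -\alpha$. The case $\alpha = +\infty$ is analogous with $U_n := \{I > n\}$ and $r \leq -n$, giving $J_F \leq -n$ for every $n$.

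The principal obstacle is precisely this upper bound. A naive attempt would approximate $r 1_{F^c}$ by Urysohn functions tied to the distance from $F$, i.e., $1 - u_\delta$ with $u_\delta$ supported in the $\delta$-neighborhood of $F$; this only produces $J_F \leq -\sup_{\delta > 0}\inf_{F^\delta} I$, which may be strictly larger than $-\inf_F I$ because sublevel sets of $I$ can accumulate on $F$ without positive distance separation, a genuine possibility in metric spaces lacking local compactness. The key trick is instead to separate $F$ from the closed sublevel set $\{I \leq \alpha - \epsilon\}$, which is always possible in a metric space regardless of their mutual distance. Once the LDP is in hand, $I = I_{\min}$ is immediate from Proposition~\ref{prop:uniquenessrate}.
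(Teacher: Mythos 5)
Your argument is correct and follows essentially the same route as the paper's proof: the decisive step in both is to separate the closed set $F$ from the closed sublevel set $\{I\le \alpha-\epsilon\}$ by a continuous $[0,1]$-valued function (you build it explicitly from metric distances, the paper invokes Urysohn's lemma together with the truncation $I^\delta=(I-\delta)\wedge\delta^{-1}$), dominate $r1_{F^c}$ by the resulting $g_r\in C_b(S)$, and evaluate $\phi(g_r)$ via the representation, after which $I=I_{\min}$ and the lower bound follow from Proposition~\ref{prop:uniquenessrate}, Remark~\ref{rem:Imin} and Corollary~\ref{cor:LDPlower} exactly as in the paper. The only detail you leave implicit is that $U^c=\{I\le\alpha-\epsilon\}$ is nonempty (which follows from $\inf_S I=-\phi(0)=0$), but this does not affect the validity of the argument.
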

\begin{proof}
Fix a closed set $A\subset S$.  
We prove that $J_A\le-\inf_{x\in A} I(x)$ for every closed set $A\subset S$. 
If $\inf_{x\in A} I(x)=0$, then the assertion holds as $J_A\le 0$ by definition.  
Thus, assume that $\inf_{x\in A} I(x)>0$.  
	Let $\delta>0$ be with $\delta<\inf_{x\in A} I(x)$	and define \[I^\delta(x):=(I(x)-\delta)\wedge \delta^{-1}\quad\mbox{for }x\in S.\]  
	Set  $\alpha:=\inf_{x\in A}I^\delta(x)\in(0,+\infty)$. 
	Since $-\inf_{x\in S} I(x)=\phi(0)=0$, it follows 
	that $\{I\le \alpha\}$ is non-empty and closed as $I$ is lower semicontinuous. 
	Moreover, it holds $\{I\le\alpha\}\cap A=\emptyset$. 
	Due to Urysohn's lemma, for every $m\in\mathbb{N}$, there exists a continuous function $h_m\colon S\to [0,m]$ such that $\{I\le \alpha\}\subset h_m^{-1}(m)$ and $A\subset h_m^{-1}(0)$. 
	Then, it follows
	\[
	J_{A}=\inf_{r\in\mathbb{R}}\phi(r 1_{A^c})\le \phi(-h_m)=-\inf_{x\in S}\{h_m(x)+I(x)\}.
	\]
	Since $h_m(x)+I(x)\ge m$ if $x\in \{I\le \alpha\}$, and $h_m(x)+I(x)\ge \alpha$ if $x\notin \{I\le \alpha\}$, by choosing  $m\ge \alpha$ it follows that
	\[
	J_{A}\le -\alpha=-\inf_{x\in A}I^\delta(x).
	\] 
	Since the inequality above is satisfied for every $\delta>0$ small enough, we have 
	\[J_A\le-\lim_{\delta\downarrow 0}\inf_{x\in A} I^\delta(x)
	=-\lim_{\delta\downarrow 0}\Big((\inf_{x\in A} I(x) - \delta)\wedge \delta^{-1}\Big)
	=-\inf_{x\in A} I(x).\] 
	 
	It follows from~\eqref{rep2} that $I\ge I_{\rm min}$ due to Remark~\ref{rem:Imin}. 
	On the other hand, since $I$ satisfies the upper bound of the LDP, we have that $I\le I_{\rm min}$ as a consequence of Proposition~\ref{prop:uniquenessrate}.  
	This proves that $I=I_{\rm min}$.

Finally, since $I=I_{\rm min}$, it follows from Corollary~\ref{cor:LDPlower}, that $I$ satisfies the lower bound in the LDP.  	
The proof is complete.
\end{proof}

We next establish the equivalence between LP and LDP for max-stable monetary risk measures.  
Namely, we show that the converse of Proposition~\ref{prop:LPimpliesLDP} holds true under max-stability, which gives, in more generality, an alternative proof  for the classical Varadhan's lemma, see e.g.~\citep[Theorem III.13, p. 32]{den2008large}. 

 \begin{theorem}\label{thm:maxStVaradhan}
	Suppose that $\phi$ is max-stable.  
	Then, $\phi$ satisfies the LDP with rate function $I$ if and only if
	\begin{equation}\label{rep1}
	\phi(f)=\sup_{x\in S}\{f(x)-I(x)\}\quad\mbox{ for all }f\in C_b(S).
	\end{equation}
	In that case, it holds $I=I_{\rm min}$.
\end{theorem}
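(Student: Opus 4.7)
The $(\Leftarrow)$ direction is already handled by Proposition~\ref{prop:LPimpliesLDP}, which requires no max-stability and simultaneously forces $I=I_{\rm min}$. So the only work is the forward direction.

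For $(\Rightarrow)$, I would assume $\phi$ is max-stable and satisfies the LDP with rate function $I$, and reduce immediately to $I=I_{\rm min}$ via Proposition~\ref{prop:uniquenessrate} (the LDP upper bound alone gives $I_{\rm min}\ge I$, the lower bound alone gives $I_{\rm min}\le I$). The key idea is then to combine the maxitive integral representation of $\phi$ with the two-sided LDP estimate on level sets of $f$. Specifically, by Proposition~\ref{prop:charactMaxSt}, max-stability yields
\[
\phi(f)=\int^X f\,{\rm d}J=\sup_{r\in\R}\big\{r+J_{\{f>r\}}\big\}\qquad\text{for all }f\in B_b(S),
\]
so it remains to compute the right-hand side for $f\in C_b(S)$ using the LDP bounds.

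For the $\le$ inequality, fix $f\in C_b(S)$ and $r\in\R$. Since $f$ is continuous, $\{f>r\}$ is open and ${\rm cl}(\{f>r\})\subseteq\{f\ge r\}$. The LDP upper bound then gives
\[
J_{\{f>r\}}\le -\inf_{x\in {\rm cl}(\{f>r\})}I_{\rm min}(x)\le -\inf_{x:\,f(x)\ge r}I_{\rm min}(x),
\]
so $r+J_{\{f>r\}}\le \sup_{x:\,f(x)\ge r}\{r-I_{\rm min}(x)\}\le \sup_{x\in S}\{f(x)-I_{\rm min}(x)\}$. Taking the supremum over $r$ yields $\phi(f)\le \sup_{x\in S}\{f(x)-I_{\rm min}(x)\}$.

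For the $\ge$ inequality, fix $x_0\in S$ and $\varepsilon>0$, and set $r:=f(x_0)-\varepsilon$. Then $x_0\in\{f>r\}$, which is open by continuity of $f$, so the LDP lower bound gives
\[
J_{\{f>r\}}\ge -\inf_{x\in\{f>r\}}I_{\rm min}(x)\ge -I_{\rm min}(x_0).
\]
Hence $\phi(f)\ge r+J_{\{f>r\}}\ge f(x_0)-\varepsilon-I_{\rm min}(x_0)$; letting $\varepsilon\downarrow 0$ and then taking the supremum over $x_0\in S$ completes the proof. No step here looks genuinely hard — the only subtlety is using continuity of $f$ to keep $\{f>r\}$ open and its closure inside $\{f\ge r\}$, which is what lets the two LDP bounds sandwich the maxitive integral.
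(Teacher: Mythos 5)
Your proposal is correct and follows essentially the same route as the paper: the backward direction via Proposition~\ref{prop:LPimpliesLDP}, and the forward direction by combining the maxitive integral representation $\phi(f)=\sup_{r\in\R}\{r+J_{\{f>r\}}\}$ from Proposition~\ref{prop:charactMaxSt} with the LDP upper bound on the closed set $\{f\ge r\}$ (equivalently ${\rm cl}(\{f>r\})$) and the lower bound on the open set $\{f>r\}$ with $r=f(x_0)-\varepsilon$. The only cosmetic difference is that you invoke Proposition~\ref{prop:uniquenessrate} up front to identify $I=I_{\rm min}$, whereas the paper obtains this identification as a byproduct of Proposition~\ref{prop:LPimpliesLDP}; both are valid.
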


\begin{proof}
We know from Proposition~\ref{prop:LPimpliesLDP} that the representation \eqref{rep1} implies the LDP and that $I=I_{\rm min}$. 	 
Suppose that $\phi$ satisfies the LDP with rate function $I$ and fix $f\in C_b(S)$. 
Since $\phi$ is max-stable, it follows from Proposition~\ref{prop:charactMaxSt} that 
\begin{equation}\label{eq:integralRep}
\phi(f)=\underset{r\in\R}\sup\{r + J_{\{f>r\}}\}.
\end{equation}
 Given $r\in\R$, due to the upper bound of the LDP, it holds $J_{\{f\ge r\}}\le -\inf_{x\in \{f\ge r\}} I(x)$ as $\{f\ge r\}$ is closed. Then, by \eqref{eq:integralRep} and monotonicity of the concentration function $J$, we have
\begin{align*}
\phi(f)&\le \underset{r\in\R}\sup\big\{r -\inf_{x\in \{f\ge r\}} I(x)\big\}\\
&=\underset{r\in\R}\sup\;\underset{x\in \{f\ge r\}}\sup\{r -I(x)\}\\
&\le\underset{r\in\R}\sup\;\underset{x\in \{f\ge r\}}\sup\{f(x) -I(x)\}\\
&\le \underset{x\in S}\sup\{f(x) -I(x)\}.
\end{align*}

On the other hand, given $r\in\R$, it holds $J_{\{f>x\}}\ge -\inf_{x\in \{f>r\}} I(x)$ due to the lower bound of the LDP. 
Then,  given $\varepsilon>0$, it follows from \eqref{eq:integralRep} that
\begin{align*}
\phi(f)&\ge \underset{r\in\R}\sup\big\{r -\inf_{x\in \{f>r\}} I(x)\big\}\\
&=\underset{r\in\R}\sup\;\underset{x\in \{f> r\}}\sup\{r -I(x)\}\\
&\ge \underset{x\in S}\sup\;\underset{y\in \{f>f(x)-\varepsilon\}}\sup\{f(x)-\varepsilon -I(y)\}\\
&\ge \underset{x\in S}\sup\{f(x)-\varepsilon -I(x)\}.
\end{align*}
Since $\varepsilon>0$ was arbitrary, we obtain $\phi(f)\ge \underset{x\in S}\sup\{f(x) -I(x)\}$. The proof is complete.
\end{proof}

\begin{remark} 
In the particular case of the asymptotic entropy,
the LDP and LP are formulated in \citep{puhalskii2001large} by means of idempotent probability measures and the Shilkret integral; see Definition 3.1.1 and Theorem 3.1.1 in \citep{puhalskii2001large}.\footnote{We thank an anonymous referee for pointing out this reference.} 
For a comparison with the present setting, we next adapt these formulations to general max-stable monetary risk measures by using the equivalent language of maxitive penalties and maxitive integrals. Actually, $\mu$ is a maxitive penalty if and only if $e^\mu$ is a idempotent probability, and the maxitive integral~\eqref{eq:maxitiveInt} can be obtained as a transformation of the Shilkret integral; see~\citep{cattaneo} for more details.  
Given a rate function $I$, the function $A\mapsto I_A\colon \mathcal{B}(S)\to[-\infty,0]$ defined as
\[I_A:=-\underset{x\in A}\inf I(x)\] is a max-stable penalty. 
Then, the LDP can be reformulated in terms of penalties as follows.   
Namely, $\phi\colon B_b(S)\to\R$ satisfies the LDP with rate function $I$ if and only if  
\begin{equation}\label{capLDP}
I_{{\rm int}(A)}\le J_A\le I_{{\rm cl}(A)}\quad\mbox{ for all }A\in\mathcal{B}(S).
\end{equation}
Moreover, the LP principle can be reformulated as an equality of maxitive integrals. 
Namely,  $\phi$ satisfies the LP with rate function $I$ if and only if
\begin{equation}\label{capLP}
\int^X f {\rm d}J=\int^X f {\rm d}I\quad\mbox{ for all }f\in C_b(S).
\end{equation}
The latter equivalence can be proved by using the maxitive integral representation in Proposition~\ref{prop:charactMaxSt} together with a similar argument as in the proof of the ``only if'' part of Theorem~\ref{thm:maxStVaradhan}. Within this framework, it shoud be noticed that Theorem~\ref{thm:maxStVaradhan} states the equivalence between \eqref{capLDP} and \eqref{capLP}.
\end{remark}

We finish by discussing the scope of the present section. For the sake of simplicity, we stated our results for a metric space $S$.  
However, the statements are valid for normal spaces.\footnote{Recall that $S$ is normal if for any two disjoint closed subsets $A,B$ of $S$ there exist two disjoint open sets $U,V$ such that $A\subset U$ and $B\subset V$.} Moreover, all the results except Propositions~\ref{prop:LPimpliesLDP} (and consequently the ``if'' part of  Theorem \ref{thm:maxStVaradhan}) are valid for \emph{completely regular spaces}.\footnote{Recall that a topological space $S$ is completely regular if it is Hausdorff and for any closed set $A\subset S$ and any point $x\notin A$, there exists a continuous function $f\colon S\to [0,1]$ such that $f(x)=1$ and $A\subset f^{-1}(0)$.} 
Propositions~\ref{prop:LPimpliesLDP} is also valid for completely regular topological spaces if one assumes that the rate function has compact sublevel sets. 
The argument follows from an adaptation to the present setting of the proof of~\citep[Theorem 4.4.13]{zeitouni1998large}.  
Finally, the ``only if'' part of  Theorem~\ref{thm:maxStVaradhan} 
(and consequently Varadhan's lemma) is valid for general topological spaces as it is a consequence of Proposition~\ref{prop:charactMaxSt}. 

\section{Locally max-stable monetary risk measures}\label{sec:locmax-stable} 
Throughout this section we consider a monetary risk measure  $\phi\colon B_b(S)\to\mathbb{R}$. 
Given a compact set $K\subset S$, we denote by $C^K(S)$ the space of all real-valued functions from $S$ to $\mathbb{R}$ of the form $f 1_K+r 1_{K^c}$, where $f\in C(K)$ and $r\in\mathbb{R}$. 

\begin{definition}
We say that $\phi$ is \emph{locally max-stable} if $\phi\mid_{C^K(S)}$ is max-stable for every compact set $K\subset S$.
\end{definition}

\begin{lemma}\label{lem2}
	Suppose that $\phi$ is locally max-stable.  
	Then, for every compact set $K\subset S$, $\phi\mid_{C^K(S)}$ has the representation 
	\[
	\phi(f 1_K  + r 1_{K^c} )=\sup_{x\in K}\{f(x) - I_K(x)\}\vee (r+J_{K^c})
	\]
	for all $f\in C(K)$ and $r\in\mathbb{R}$, where
	\[
	I_K(x):=\sup_{f\in C(K)}\Big\{f(x)-\underset{r\in\mathbb{R}}\inf\phi(f 1_K + r 1_{K^c})\Big\}.
	\] 
	Recall that $J_{K^c}=\inf_{r\in \mathbb{R}}\phi(r 1_{K})$.
\end{lemma}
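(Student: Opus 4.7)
The plan is to disentangle the behavior of $\phi$ on $K$ from its behavior on $K^c$ by introducing an auxiliary functional on $C(K)$ and reducing the representation to Corollary~\ref{cor:compact}, then reassembling with the help of translation invariance. Specifically, I would define
\[
\phi_K(f) := \inf_{r\in\mathbb{R}}\phi(f 1_K + r 1_{K^c}),\qquad f\in C(K),
\]
which, by monotonicity (M), is actually a monotone limit as $r\downarrow-\infty$, and which by definition satisfies $I_K(x)=\sup_{f\in C(K)}\{f(x)-\phi_K(f)\}$. So $\phi_K$ is the right object to represent using the theory from Section~\ref{sec:max-stable}.

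Next I would check that $\phi_K$ is a max-stable monetary risk measure on $C(K)$, up to an additive constant. Translation (T) follows from the identity $(f+c)1_K + r1_{K^c}=f1_K+(r-c)1_{K^c}+c$ after substituting $r'=r-c$. Monotonicity is immediate. For max-stability, use $(f\vee g)1_K + r1_{K^c}=(f1_K+r1_{K^c})\vee(g1_K+r1_{K^c})$ and the max-stability of $\phi|_{C^K(S)}$ to get a pointwise-in-$r$ inequality; since both $r\mapsto \phi(f1_K+r1_{K^c})$ and $r\mapsto\phi(g1_K+r1_{K^c})$ are monotone non-decreasing, the infimum over $r$ commutes with the finite max, yielding $\phi_K(f\vee g)\le \phi_K(f)\vee\phi_K(g)$. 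Because $\phi_K(0)=J_K$ may differ from $0$, I would work with $\tilde\phi_K:=\phi_K-J_K$, a genuine max-stable monetary risk measure on $C(K)$, and apply Corollary~\ref{cor:compact} to get $\tilde\phi_K(f)=\max_{x\in K}\{f(x)-\tilde I_K(x)\}$. Unwinding the shift identifies $\tilde I_K(x)=I_K(x)+J_K$, giving $\phi_K(f)=\max_{x\in K}\{f(x)-I_K(x)\}$.

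The second key step is the decomposition
\[
\phi(f 1_K + r 1_{K^c}) = \phi_K(f) \vee (r + J_{K^c}),
\]
which I would prove using the pointwise identity
\[
f 1_K + r 1_{K^c} = (f 1_K + s 1_{K^c}) \vee (t 1_K + r 1_{K^c}),\qquad s\le r,\ t\le \min_K f,
\]
combined with the equality form of max-stability from Lemma~\ref{lem:max-stable0}(ii):
\[
\phi(f 1_K + r 1_{K^c}) = \phi(f 1_K + s 1_{K^c}) \vee \phi(t 1_K + r 1_{K^c}).
\]
By translation (T), $\phi(t 1_K + r 1_{K^c}) = r + \phi((t-r) 1_K)$, which decreases to $r+J_{K^c}$ as $t\downarrow-\infty$; and $\phi(f1_K+s1_{K^c})\downarrow \phi_K(f)$ as $s\downarrow-\infty$. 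Because the max of two monotone non-increasing sequences commutes with their limits, letting $s,t\downarrow-\infty$ concludes the decomposition. Combining with the representation of $\phi_K$ from the previous step yields the stated formula.

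The main obstacle I expect is bookkeeping around degenerate cases: if $J_K=-\infty$ then $\phi_K\equiv-\infty$ and $I_K\equiv+\infty$, and one has to make sure the shift-and-apply-Corollary~\ref{cor:compact} step is either interpreted appropriately or handled separately; similarly if $J_{K^c}=-\infty$ the lattice operation with $r+J_{K^c}$ must be interpreted in $[-\infty,\infty)$. Once these technicalities are sorted out, the structural argument — ``restrict to $K$ via a monotone limit, apply the compact representation, and paste in the $K^c$-contribution through translation invariance'' — is quite direct.
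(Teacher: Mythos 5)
Your argument is correct, but it is organized differently from the paper's. The paper also reduces to Corollary~\ref{cor:compact}, but it does so by adjoining a single point $x_0\in S\setminus K$ and defining $\phi_K(f):=\phi(f1_K+f(x_0)1_{K^c})$ on $C(K\cup\{x_0\})$, which is a genuine real-valued max-stable monetary risk measure on a compact set; the whole lemma then falls out of one application of Corollary~\ref{cor:compact} plus two computations of convex conjugates, namely $\phi_K^\ast(\delta_x)=I_K(x)$ for $x\in K$ and $\phi_K^\ast(\delta_{x_0})=-J_{K^c}$. You instead pass to the monotone limit $\phi_K(f)=\inf_r\phi(f1_K+r1_{K^c})$ on $C(K)$, renormalize by $J_K$, apply Corollary~\ref{cor:compact} there, and then prove the lattice decomposition $\phi(f1_K+r1_{K^c})=\phi_K(f)\vee(r+J_{K^c})$ separately via Lemma~\ref{lem:max-stable0}(ii) and the interchange of monotone limits with finite maxima. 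Both routes are sound: yours makes the structural splitting of $\phi$ into a $K$-part and a $K^c$-part explicit, and all the individual steps you sketch (translation invariance of $\phi_K$, commuting $\inf_r$ with $\vee$ for monotone families, the two pointwise lattice identities) check out. The price is exactly the degenerate bookkeeping you flag: when $J_K=-\infty$ your $\phi_K$ is identically $-\infty$ and the shift-and-apply step is vacuous, so that case must be disposed of directly (note that max-stability on $C^K(S)$ forces $J_K\vee J_{K^c}=0$, so $J_{K^c}=0$ there and your decomposition alone yields the formula). The paper's one-point trick sidesteps this entirely because its $\phi_K$ is always finite. So: correct, slightly longer, and arguably more transparent about where $I_K$ and $J_{K^c}$ each come from.
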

\begin{proof}
 Fix a compact set $K\subset S$. 
 If $S=K$, then the result follows from Corollary~\ref{cor:compact}.     
 Suppose that $K\neq S$ and choose $x_0\in S\setminus K$. 
 Define 
 \[
 \phi_K\colon C(K\cup\{x_0\})\to\R,\quad \phi_K(f):=\phi(f 1_K + f(x_0) 1_{K^c}).
 \] 
 Since $\phi\mid_{C^K(S)}$ is max-stable, it follows that $\phi_K$ is max-stable. 
 We have that $K\cup\{x_0\}$ is compact, then by applying Corollary~\ref{cor:compact}  to $\phi_K$ we get
    \[
	\phi(f 1_K  + r 1_{K^c})=\phi_K(f 1_K  + r 1_{\{x_0\}})=\sup_{x\in K}\{f(x)- \phi^\ast_K(\delta_x)\}\vee
	(r- \phi_K^\ast(\delta_{x_0})).
	\]
	If $x\in K$, then it holds
	\begin{align*}
	\phi_K^\ast(\delta_x)=&\sup_{f\in C(K\cup\{x_0\})}\{ f(x) - \phi(f 1_K  + f(x_0) 1_{K^c})\}\\
	=&\sup_{f\in C(K),\,r\in \mathbb{R}}\{ f(x) - \phi(f 1_K  + r 1_{K^c})\}\\
	=&\sup_{f\in C(K)}\{ f(x) - \inf_{r\in \mathbb{R}}\phi(f 1_K  + r 1_{K^c})\}\\
	=&I_K(x).
	\end{align*}
	On the other hand, 
	\begin{align*}
	\phi_K^\ast(\delta_{x_0})=&\sup_{f\in C(K\cup\{x_0\})}\{ f(x_0) - \phi(f 1_K  + f(x_0) 1_{K^c})\}\\
	=&\sup_{f\in C(K),\,r\in \mathbb{R}}\{ r - \phi(f 1_K  + r 1_{K^c})\}\\
	=&\sup_{f\in C(K),\,r\in \mathbb{R}}\{ - \phi(f 1_K  + r 1_{K^c}-r)\}\\
	=&\sup_{f\in C(K),\,r\in \mathbb{R}}\{ - \phi(f 1_K  - r 1_{K})\}\\
	=&\sup_{f\in C(K),\,r\in \mathbb{R}}\{ - \phi((f - r)1_K )\}\\
	=&-\inf_{r\in\mathbb{R}}\phi(r 1_K)=-J_{K^c}.
	\end{align*}
	The proof is complete.
\end{proof}

\begin{lemma}\label{lem:IK}
For every compact set $K\subset S$, it holds
\begin{itemize}
	\item[(i)] $I_K(x)\ge I_{\rm min}(x)$ for all $x\in K$,
	\item[(ii)] $I_{\rm min}(x)\ge -J_{K^c}$ for all $x\notin K$. 
\end{itemize}
\end{lemma}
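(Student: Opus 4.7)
My plan is to read both parts directly off the definitions of $I_K$ and $I_{\min}$, using only monotonicity of $\phi$ together with Proposition~\ref{prop:ratefunction}; no (local) max-stability is needed. The key idea for (i) is to compare an arbitrary test function $f\in C_b(S)$ with its ``truncated extension'' $g 1_K+r 1_{K^c}$, where $g:=f|_K$, and push the resulting inequality through $\phi$. The key idea for (ii) is that compactness of $K$ forces, for any $x\notin K$, a sufficiently small open ball about $x$ to be disjoint from $K$, so that $\lim_{\delta\downarrow 0}J_{B_\delta(x)}$ is eventually dominated by $J_{K^c}$.

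For (i), I would fix $x\in K$ and an arbitrary $f\in C_b(S)$; setting $g:=f|_K\in C(K)$ one has $g(x)=f(x)$. Choosing any $r\le\inf_{y\in S} f(y)$ (finite since $f$ is bounded) makes $g 1_K+r 1_{K^c}\le f$ pointwise (they agree on $K$, and on $K^c$ one has $r\le f$), so monotonicity (M) gives $\phi(g 1_K+r 1_{K^c})\le\phi(f)$ and hence $\inf_{r\in\mathbb{R}}\phi(g 1_K+r 1_{K^c})\le\phi(f)$. This yields
$f(x)-\phi(f)\le g(x)-\inf_{r\in\mathbb{R}}\phi(g 1_K+r 1_{K^c})\le I_K(x)$,
and taking the supremum over $f\in C_b(S)$ finishes (i).

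For (ii), I would fix $x\notin K$; since $K$ is compact (hence closed) there exists $\delta_0>0$ with $B_{\delta_0}(x)\subset K^c$. Monotonicity of the set function $J$ along inclusion (stated immediately after its definition, and a direct consequence of (M)) then gives $J_{B_\delta(x)}\le J_{B_{\delta_0}(x)}\le J_{K^c}$ for every $0<\delta\le\delta_0$. Proposition~\ref{prop:ratefunction} converts this into $-I_{\min}(x)=\lim_{\delta\downarrow 0} J_{B_\delta(x)}\le J_{K^c}$, i.e.\ $I_{\min}(x)\ge -J_{K^c}$.

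I do not anticipate any substantial obstacle: each part amounts to a single monotonicity step once the right comparison object is identified. The only mildly subtle observation is that part (i) uses only monotonicity of $\phi$, not local max-stability, which is consistent with the lemma being stated without an explicit local max-stability hypothesis.
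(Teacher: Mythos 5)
Your proof is correct and follows essentially the same route as the paper: part (i) is the same monotonicity comparison of $f$ with $f|_K 1_K + r1_{K^c}$ for $r$ sufficiently negative (the paper writes it with $-r1_{K^c}$, $r\ge\|f\|_\infty$, and mentions Tietze, but the substance is identical), and part (ii) is exactly the argument behind Corollary~\ref{cor:LDPlower} applied to the open set $K^c$, which is how the paper disposes of it. No gaps.
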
	
\begin{proof}
As for (i), suppose that $K\subset S$ is compact and fix $x\in K$.   
Given $f\in C_b(S)$, we have \[f(x) - \phi(f1_K  - r 1_{K^c})\ge f(x)-\phi(f),\]
for all $r\in\mathbb{R}$ with $r\ge \|f\|_\infty$, which shows that 
\[f(x) - \inf_{r\in\mathbb{R}}\phi(f1_K  - r 1_{K^c})\ge f(x)-\phi(f).\]
Since by Tietze's extension theorem, every $f\in C(K)$ has an extension $\bar f\in C_b(S)$ such that $\bar f\mid_K=f$, we conclude
\[I_K(x)=\sup_{f\in C(K)}\big\{f(x) - \inf_{r\in\mathbb{R}}\phi(f1_K  - r 1_{K^c})\big\}\ge \sup_{f\in C_b(S)}\{f(x)-\phi(f)\}=I_{\rm min}(x).\]
Finally, notice that (ii) is a direct consequence of Corollary~\ref{cor:LDPlower}.
\end{proof}	


\begin{theorem}\label{th:localMaxSt}
	Suppose that $\phi$ is locally max-stable and one of the following conditions is satisfied:
	\begin{itemize}
		\item[(A)] For every $M>0$, there exists a compact set $K\subset S$ such that $-J_{K^c}\ge M$.
		
		\item[(B)] There exists $I(\infty)\in \mathbb{R}$ such that for every $\varepsilon>0$  there exists a compact set $K\subset S$ so that
		\[-J_{K^c}\ge I(\infty)-\varepsilon \quad\mbox{and}\quad I_{\rm min}(x)\le I(\infty)+\varepsilon\mbox{ for all }x\in K^c.\] 
	\end{itemize}
	Then, it holds
	\begin{equation}\label{rep:main:local}
	\phi(f)=\sup_{x\in S}\{f(x)-I_{\rm min}(x)\}\quad \mbox{for all }f\in C_b(S)
	\end{equation}
	and $\phi$ satisfies the LDP with rate function $I_{\rm min}$. 
	Moreover, if (A) holds, then $I_{\rm min}$ has compact sublevel sets.
\end{theorem}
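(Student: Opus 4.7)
The plan is to establish the Laplace representation $\phi(f) = \sup_{x \in S}\{f(x) - I_{\min}(x)\}$ on $C_b(S)$ directly, from which the LDP with rate function $I_{\min}$ will follow by Proposition~\ref{prop:LPimpliesLDP}. The lower bound $\phi(f) \geq \sup_{x \in S}\{f(x) - I_{\min}(x)\}$ is immediate from the definition of $I_{\min}$ (taking $g = f$ in the supremum), so all the work lies in the reverse inequality. Fix $f \in C_b(S)$ and set $M := \sup_{x \in S}\{f(x) - I_{\min}(x)\}$. For any compact $K \subset S$ and any $r \geq \sup_{K^c} f$, monotonicity gives $\phi(f) \leq \phi(f|_K 1_K + r 1_{K^c})$, and Lemma~\ref{lem2} identifies the right-hand side with
\[
\sup_{x \in K}\{f(x) - I_K(x)\} \vee (r + J_{K^c}).
\]
Since Lemma~\ref{lem:IK}(i) bounds the first term by $\sup_{x \in K}\{f(x) - I_{\min}(x)\} \leq M$, the entire problem reduces to choosing $K$ and $r$ so that $r + J_{K^c}$ lies just above $M$.

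Under (A) I will take $r = \|f\|_\infty$ and, given $\varepsilon > 0$, pick a compact $K$ with $-J_{K^c} \geq \|f\|_\infty - M + \varepsilon$ (the right-hand side is nonnegative because $M \leq \|f\|_\infty$); this forces $r + J_{K^c} \leq M - \varepsilon$. The degenerate case $M = -\infty$ leads to $\phi(f) \leq -n$ for every $n$, contradicting $\phi(f) \in \mathbb{R}$, so it cannot occur. Under (B) the situation is more delicate: $-J_{K^c}$ is essentially pinned at $I(\infty)$, so the crude choice $r = \|f\|_\infty$ will not work. Instead, I will set $r := \sup_{K^c} f$ (reverting to $r = \|f\|_\infty$ if $K^c = \emptyset$) and pick $K$ satisfying both halves of (B) to within $\varepsilon$. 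The bound $I_{\min}(x) \leq I(\infty) + \varepsilon$ on $K^c$ gives $M \geq r - I(\infty) - \varepsilon$, and combined with $J_{K^c} \leq -I(\infty) + \varepsilon$ this yields $r + J_{K^c} \leq M + 2\varepsilon$. Sending $\varepsilon \downarrow 0$ in either scenario finishes the Laplace representation. The main subtlety, as I see it, is precisely this coordination of $r$, $K$, and both halves of (B); once $-J_{K^c}$ is only allowed to be of order $I(\infty)$ rather than arbitrary, the choice $r = \sup_{K^c} f$ is forced, and one has to exploit Lemma~\ref{lem:IK}(ii) together with the upper bound on $I_{\min}$ outside $K$ in tandem.

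With the Laplace representation in hand, Proposition~\ref{prop:LPimpliesLDP} immediately delivers the LDP with rate function $I_{\min}$. For the moreover statement under (A), fix $\alpha \geq 0$ and apply (A) with the parameter $M = \alpha + 1$ to obtain a compact $K$ with $-J_{K^c} > \alpha$; Lemma~\ref{lem:IK}(ii) then gives $I_{\min}(x) > \alpha$ for every $x \in K^c$, so $\{I_{\min} \leq \alpha\} \subset K$. Since $I_{\min}$ is lower semicontinuous, this sublevel set is closed, and as a closed subset of a compact set it is compact, as required.
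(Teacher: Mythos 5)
Your proof is correct and follows essentially the same route as the paper: dominate $f$ by a function in $C^K(S)$, apply Lemma~\ref{lem2} together with Lemma~\ref{lem:IK} to bound the two terms, use condition (A) or (B) to control the tail term $r+J_{K^c}$, and then invoke Proposition~\ref{prop:LPimpliesLDP}; the compact-sublevel-set argument is identical. The only (immaterial) difference is your choice $r=\sup_{K^c}f$ under (B), where the paper takes $r=a+I(\infty)+\varepsilon$ --- both choices exploit the same two halves of (B) and yield the same $2\varepsilon$ bound.
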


\begin{proof} 
Fix $f\in C_b(S)$ and set $a:=\sup_{x\in S}\{ f(x) - I_{\rm min}(x)\}$. 
By definition of $I_{\rm min}$ one has that $\phi(f)\ge a$. 
Thus, we have to prove that $\phi(f)\le a$.

We first assume that condition (A) holds.
Fix $M>0$ and choose a compact set $K\subset S$ such that $-J_{K^c}\ge M$.  
Define $\bar{f}:=f 1_K  +  \Vert f\Vert_\infty 1_{K^c}$. 
By Lemma \ref{lem2} and Lemma \ref{lem:IK} we obtain 
\begin{align*}
\phi(f)&\le \phi(\bar{f})=\sup_{x\in K} \{f(x) - I_K(x)\} \vee  \{\Vert f\Vert_\infty + J_{K^c} \}\le a \vee \{\Vert f\Vert_\infty - M\}.
\end{align*}
By choosing $M$ large enough, we obtain that $\phi(f)\le a$, which shows \eqref{rep:main:local}. 

We prove that $I_{\rm min}$ has compact sublevel sets. 
Given $r\ge 0$, there exists a compact set $K\subset S$ such that
\[
-J_{K^c}\ge r+1.
\]
Then we have $I^{-1}_{\rm min}([0,r])\subset K$, since otherwise
there exists $x\in I^{-1}_{\rm min}([0,r])$ such that $x\notin K$, which by Lemma \ref{lem:IK} would imply $r\ge I_{\rm min}(x)\ge -J_{K^c} \ge r+1$. Since $I_{\rm min}$ is lower semicontinuous, it follows that $I^{-1}_{\rm min}([0,r])$ is compact.

Now, suppose that condition (B) holds. 
Fix $\varepsilon>0$ and choose a compact set $K\subset S$ such that $I_{\rm min}(x)\le I(\infty)+\varepsilon$ for all $x\in K^c$, and $-J_{K^c}\ge I(\infty)-\varepsilon$. Define $\tilde{f}:=f 1_K  + (a + I(\infty) + \varepsilon)1_{K^c}$. Then, since $a\ge f(x)-I_{\rm min}(x)$ for all $x\in S$, we have 
\[
f=f 1_K  + f 1_{K^c} \le f 1_K  +  (a + I_{\rm min}) 1_{K^c}\le f 1_K  + (a + I(\infty)+\varepsilon)1_{K^c} = \tilde{f}.
\]
Thus, by Lemma \ref{lem2} and Lemma \ref{lem:IK}, we obtain 
\begin{align*}
\phi(f)&\le \phi(\tilde{f})\\
&=\sup_{x\in K} \{f(x) - I_K(x)\} \vee \{a + I(\infty)+\varepsilon + J_{K^c} \}\\
&\le a \vee  \{a + I(\infty)+\varepsilon - I(\infty)+\varepsilon\}\\  
&\le a \vee (a +2\varepsilon)\le \sup_{x\in S}\{f(x)-I_{\rm min}(x)\} + 2\varepsilon.
\end{align*}
As $\varepsilon$ was arbitrary, we derive \eqref{rep:main:local}.

Finally,  since $\phi$ has the representation \eqref{rep:main:local}, by  Proposition~\ref{prop:LPimpliesLDP},  $\phi$ satisfies the LDP with rate function $I_{\rm min}$ . 
The proof is complete.\end{proof}

\begin{remark}
Suppose that $\hat{S}:=S\cup\{\infty\}$ is the Alexandroff one-point compactification of $S$.  
Recall that $\hat{S}$ is endowed with the topology consisting of all open subsets of $S$ together with all sets of the form $V=K^c\cup\{\infty\}$, where $K$ is compact; see \citep{kelley} for further details. 
Then, conditions (A) and (B) in Theorem~\ref{th:localMaxSt} can be unified
as a continuity condition on $\hat{S}$. 
Namely, extend $I_{\rm min}$ to a function $\hat{I}\colon \hat{S}\to[0,+\infty]$ by setting $\hat{I}(\infty):=\sup\{-J_{K^c}\colon K\subset S\text{ compact}\}$, and consider $[0,+\infty]$ as the one-point compactification of $[0,+\infty)$.  
Then, it can be verified that $\hat{I}$ is continuous at $\infty$ if and only if one of the conditions (A) or (B) is satisfied. 
Further, condition (A) corresponds to the case $\hat{I}(\infty)=+\infty$, and condition (B) corresponds to the case $\hat{I}(\infty)<+\infty$. 

Under condition (A) the rate function $I_{\rm min}$ has compact sublevel sets.   
However, the following example shows that the compactness of the sublevel sets of $I_{\rm min}$ is not a sufficient condition for the representation~\eqref{rep:main:local}.  
\end{remark}

\begin{example}
Suppose that $(X_n)_{n\in\N}$ is a sequence of random variables with values in $\mathbb{N}$,  
such that for every $m\in\mathbb{N}$, it holds 
\[
\mathbb{P}(X_n=m)=
\begin{cases}
e^{-n m}, &\mbox{ if }n\neq m,\\
1-\frac{e^{-n}}{1-e^{-n}}+e^{-n^2}, & \mbox{ if }n=m.
\end{cases}
\]
Then, for every $f=(f(m))_{m\in\mathbb{N}}\in B_b(\mathbb{N})$, the upper asymptotic entropy is given by
\[
\overline{\phi}_{\rm ent}(f):=\limsup_{n\to\infty}\frac{1}{n}\log\mathbb{E}[\exp(n f(X_n))]=\limsup_{n\to\infty}\frac{1}{n}\log\sum_{m=1}^\infty e^{n f(m)} \mathbb{P}(X_n=m).
\]
In this case, the rate function is explicitly given (see~\eqref{eq:rateexp} below) by
\begin{align*}
I_{\rm min}(m)&=-\lim_{\delta\downarrow 0}\limsup_{n\to\infty}\frac{1}{n}\log\mathbb{P}(X_n\in B_\delta(m))\\
&=-\limsup_{n\to\infty}\frac{1}{n}\log\mathbb{P}(X_n=m)\\
&=-\limsup_{n\to\infty}\frac{1}{n}\log e^{-n m}=m,
\end{align*}
so that $I_{\rm min}$ has compact sublevel sets.  

Moreover, defining $K_m:=\{1,2,\ldots,m\}$ it follows from \eqref{eq:Jexp} below that 
\begin{align*}
J_{K^c_m}
&=\limsup_{n\to\infty}\frac{1}{n}\log\mathbb{P}(X_n> m)\\
&=\limsup_{n\to\infty}\frac{1}{n}\log \left(\sum_{k=m+1}^\infty e^{-n k} + 1-\frac{e^{-n}}{1-e^{-n}}\right)\\
&= \underset{n\to\infty}\lim\frac{1}{n}\log \left(\frac{e^{-n(m+1)}}{{1-e^{-n}}}+ 1-\frac{e^{-n}}{1-e^{-n}}\right)=0.
\end{align*}
Since every compact $K\subset \mathbb{N}$ is finite, it holds $K\subset K_m$ for $m$ large enough, and therefore $0=J_{K_m^c}\le J_{K^c}$. 
Hence, as the concentration function $J$ assumes values in $[-\infty,0]$, it follows that $J_{K^c}=0$. 
We conclude that  condition (B) is not satisfied as $I_{\rm min}$ is not bounded on the complement of a compact subset, and condition (A) does not hold as $\hat{I}(\infty)=0<+\infty$.
 
Finally, we show that $\overline{\phi}_{\rm ent}$ does not admit the representation~\eqref{rep:main:local}. 
Indeed, for the constant function $f\equiv 1$, it holds 
$\sup_{m\in\mathbb{N}}\{f_m - I_{\rm min}(m)\}=\sup_{m\in\mathbb{N}}\{1 - m\}=0$, but
\[
\overline{\phi}_{\rm ent}(f)=\limsup_{n\to\infty}\frac{1}{n}\log\mathbb{E}[\exp(n f(X_n))]=\limsup_{n\to\infty}\frac{1}{n}\log\mathbb{E}[\exp(n)]=1.
\]
As a consequence, the compactness of the sublevel sets of $I_{\rm min}$ is not a sufficient condition for  the representation~\eqref{rep:main:local}.
\end{example}

We next provide an example where condition (A) does not hold, but condition (B) is satisfied. 

\begin{example}
 Suppose that $\mathbb{Q}=\{q_1,q_2,\ldots\}$ is an enumeration of the set of rational numbers. 
Consider the deterministic random variables $X_n\equiv q_n$, for $n\in\mathbb{N}$. 
Then, for every $f\in C_b(\mathbb{R})$ the asymptotic entropy is given by
\[
\overline{\phi}_{\rm ent}(f)=\limsup_{n\to\infty}\frac{1}{n}\log\mathbb{E}[\exp(n f(X_n))]=\sup_{r\in\mathbb R}f(r).
\]
 By \eqref{eq:rateexp} we have
\[
I_{\rm min}(x)=-\lim_{\delta\downarrow 0}\limsup_{n\to\infty}\frac{1}{n}\log\mathbb{P}(X_n\in B_\delta(x))=0
\]
for every $x\in\mathbb{R}$. 
Further, by~\eqref{eq:Jexp} for every compact set $K\subset \mathbb{R}$, it holds
\[
J_{K^c}=\lim_{\delta\downarrow 0}\limsup_{n\to\infty}\frac{1}{n}\log\mathbb{P}(X_n\in K^c)=0.
\]
Then, $\overline{\phi}_{\rm ent}$ satisfies condition (B) for $I(\infty):=0$, but it does not satisfy  condition (A).
\end{example}

\begin{remark}
Let us discuss the scope of Theorem~\ref{th:localMaxSt}. Under the assumption that 
$\phi\colon B_b(S)\to\mathbb{R}$ is max-stable, 
Theorem~\ref{th:localMaxSt} is valid when $S$ is a completely regular topological space. 
In that case, each of the conditions (A) and (B) implies the LDP, and therefore the representation~\eqref{rep:main:local} follows from Theorem~\ref{thm:maxStVaradhan}. 

For instance, suppose that (B) is satisfied and $\phi$ is max-stable.   
Then, for $A\subset S$ closed and $\varepsilon>0$, there exists a compact set $K\subset S$ such that $I_{\rm min}(x)\le I(\infty)+\varepsilon$ for all $x\in K^c$, and $-J_{K^c}\ge I(\infty)-\varepsilon$. 
Then, by Corollary~\ref{cor:LDPlower} and Lemma~\ref{lem:max-stableJ}, since $A\cap K$ is compact, it holds 
\begin{align*}
J_{A}\le& J_{A\cap K}\vee J_{K^c}\\
&\le -\underset{x\in A\cap K}\inf I_{\rm min}(x)\vee \left(-I(\infty)+\varepsilon\right)\\
&\le -\underset{x\in A\cap K}\inf I_{\rm min}(x)\vee \big(-\underset{x\in A\cap K^c}\inf I_{\rm min}(x)+ 2\varepsilon\big)\\
&\le -\underset{x\in A}\inf I_{\rm min}(x)+ 2\varepsilon.
\end{align*}
Since $\varepsilon>0$ was arbitrary, we get $J_A\le -\inf_{x\in A} I(x)$. A similar argument shows that condition (A) together with the max-stability of $\phi$ implies the LDP.  
\end{remark}

\section{Large deviations for pairs of monetary risk measures}\label{sec:pairs}
For the forthcoming discussion, let $\overline{\phi},\,\underline{\phi}\,\colon B_b(S)\to\mathbb{R}$   
be  two monetary risk measures such that 
\[\underline{\phi}(f)\le \overline{\phi}(f)\quad\mbox{ for all }f\in C_b(S).\]
Let $I\colon S\to[0,+\infty]$ be a rate function.
We will provide conditions under which $\overline{\phi}$ and $\underline{\phi}$ coincide on $C_b(S)$, and satisfy a LP with rate function $I$. 
 Define the respective minimal rate functions $\overline{I}$ and $\underline{I}$ as in \eqref{eq:Imin}, and the associated concentration functions $\overline{J}$ and $\underline{J}$ as in \eqref{eq:JA}. It follows directly from the definitions that $\overline{I}(x)\le\underline{I}(x)$  for all $x\in S$, and $\underline{J}_A\le \overline{J}_A$ for all $A\in\mathcal{B}(S)$. 

\begin{proposition}\label{prop:upperlower}
Suppose that $\overline{\phi}$ satisfies the LP with rate function $I$, as well as $-\inf_{x\in A}I(x)\le \underline{J}_{A}$ for every open set $A\subset S$. Then, $\overline{\phi}(f)=\underline{\phi}(f)$ for all $f\in C_b(S)$ and $\underline{I}=\overline{I}=I$. 
 In that case, it holds
 \begin{equation}\label{eq:extendedLDP0}
-\inf_{x\in \textnormal{int} (A)}{I}(x)\le \underline{J}_{A}\le \overline{J}_A\le -\inf_{x\in \textnormal{cl} (A)}{I}(x)
\end{equation}
for all $A\in\mathcal{B}(S)$.
\end{proposition}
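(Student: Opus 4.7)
My plan is to squeeze the minimal rate function $\underline{I}$ between the lower bound $\underline{I}\ge \overline{I}=I$ coming from the standing inequality $\underline{\phi}\le\overline{\phi}$ on $C_b(S)$ and the upper bound $\underline{I}\le I$ coming from the hypothesized LDP lower bound on open sets. Once $\underline{I}=\overline{I}=I$ has been established, the coincidence of $\overline{\phi}$ and $\underline{\phi}$ on $C_b(S)$ follows from a short dual argument, and the outer inequalities in \eqref{eq:extendedLDP0} drop out of Proposition \ref{prop:LPimpliesLDP} applied separately to $\underline{\phi}$ and to $\overline{\phi}$.

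First I would verify $\underline{I}=\overline{I}=I$. Since $\overline{\phi}$ satisfies the LP with rate function $I$, Proposition \ref{prop:LPimpliesLDP} (and Remark \ref{rem:Imin}) yields $\overline{I}=I$. Plugging the standing inequality $\underline{\phi}\le\overline{\phi}$ on $C_b(S)$ into the defining suprema gives
\[
\underline{I}(x)=\sup_{f\in C_b(S)}\{f(x)-\underline{\phi}(f)\}\ge \sup_{f\in C_b(S)}\{f(x)-\overline{\phi}(f)\}=\overline{I}(x)=I(x),
\]
and hence $\underline{I}\ge I$. For the reverse, the hypothesis $-\inf_{x\in A}I(x)\le \underline{J}_A$ for every open $A\subset S$ is precisely the input required by the second half of Proposition \ref{prop:uniquenessrate} applied to $\underline{\phi}$, and therefore $\underline{I}\le I$. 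Combining, $\underline{I}=\overline{I}=I$.

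Next, I would deduce $\overline{\phi}=\underline{\phi}$ on $C_b(S)$ and then the extended LDP bounds. For $f\in C_b(S)$, the defining property $\underline{I}(x)\ge f(x)-\underline{\phi}(f)$ rearranges to $f(x)-\underline{I}(x)\le\underline{\phi}(f)$ for every $x$; taking the supremum in $x$ and using $\underline{I}=I$ together with the LP of $\overline{\phi}$ produces
\[
\overline{\phi}(f)=\sup_{x\in S}\{f(x)-I(x)\}=\sup_{x\in S}\{f(x)-\underline{I}(x)\}\le\underline{\phi}(f),
\]
which together with the standing hypothesis yields equality. Consequently $\underline{\phi}$ also satisfies the LP with rate function $I$, and Proposition \ref{prop:LPimpliesLDP} supplies the LDP with rate function $I$ for both risk measures. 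The leftmost inequality in \eqref{eq:extendedLDP0} then follows from the LDP for $\underline{\phi}$ applied to the open set $\textnormal{int}(A)$ combined with monotonicity of $\underline{J}$, the rightmost inequality is the upper bound of the LDP for $\overline{\phi}$, and the middle inequality $\underline{J}_A\le\overline{J}_A$ is the one already recorded before the statement as a direct consequence of the definitions. I do not anticipate any genuine obstacle; the one place that deserves care is the dual step $\overline{\phi}(f)\le\underline{\phi}(f)$, which is \emph{not} the direction of the standing hypothesis and must be extracted by substituting $\underline{I}$ for $I$ inside the LP representation of $\overline{\phi}$.
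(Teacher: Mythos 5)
Your proof is correct and follows essentially the same route as the paper: Proposition \ref{prop:uniquenessrate} applied to $\underline{\phi}$ gives $\underline{I}\le I$, the definitional inequality $\underline{\phi}(f)\ge\sup_{x\in S}\{f(x)-\underline{I}(x)\}$ combined with the LP of $\overline{\phi}$ closes the chain $\overline{\phi}\ge\underline{\phi}\ge\overline{\phi}$, and Proposition \ref{prop:LPimpliesLDP} delivers the identification of the rate functions and the two-sided LDP bounds. The only difference is cosmetic: you establish $\underline{I}=\overline{I}=I$ first and then deduce $\overline{\phi}=\underline{\phi}$, whereas the paper runs the inequality chain first and reads off the rate-function identities at the end.
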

\begin{proof}
Since $-\inf_{x\in A}I(x)\le \underline{J}_{A}$ 
 for all $A\subset S$ open, by applying Proposition~\ref{prop:uniquenessrate} to $\underline{\phi}$, we have that $\underline{I}\le {I}$. 
 Therefore, for $f\in C_b(S)$, it holds
 \[
\overline{\phi}(f)\ge\underline{\phi}(f)\ge \sup_{x\in S}\{f(x)-\underline{I}(x)\}\ge \underset{x\in S}\sup\{f(x)-{I}(x)\}= \overline{\phi}(f).
 \]
We get that $\overline{\phi}(f)=\underline{\phi}(f)$ for all $f\in C_b(S)$. 
Finally, from Proposition~\ref{prop:LPimpliesLDP}, it follows that $\underline{I}=\overline{I}=I$ and \eqref{eq:extendedLDP0} holds. 
\end{proof}

\begin{proposition}\label{prop:extendedLDP}
Suppose that $\overline{\phi}$ is max-stable.   
Then, 
\begin{equation}\label{eq:extendedLDP}
-\inf_{x\in \textnormal{int} (A)}{I}(x)\le \underline{J}_{A}\le \overline{J}_A\le -\inf_{x\in \textnormal{cl} (A)}{I}(x)\quad\mbox{for all } A\in\mathcal{B}(S)
\end{equation}
 if and only if
\begin{equation}\label{eq:representationLower}
\overline{\phi}(f)=\underline{\phi}(f)=\sup_{x\in S}\{f(x)-{I}(x)\}\quad\mbox{for all }f\in C_b(S). 
\end{equation}
In that case, it holds ${I}=\underline{I}=\overline{I}$.
\end{proposition}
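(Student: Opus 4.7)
The plan is to handle the equivalence in two directions, using Theorem~\ref{thm:maxStVaradhan} and Proposition~\ref{prop:LPimpliesLDP} to translate between LP and LDP, and Proposition~\ref{prop:uniquenessrate} to bound the minimal rate function of $\underline{\phi}$ from above.

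For the ``if'' direction, assume the representation $\overline{\phi}(f)=\underline{\phi}(f)=\sup_{x\in S}\{f(x)-I(x)\}$ for all $f\in C_b(S)$. Since both risk measures then satisfy the LP with rate function $I$, I would apply Proposition~\ref{prop:LPimpliesLDP} separately to $\overline{\phi}$ and $\underline{\phi}$. This yields the upper LDP bound $\overline{J}_A\le -\inf_{x\in \mathrm{cl}(A)}I(x)$ from $\overline{\phi}$ and the lower LDP bound $-\inf_{x\in \mathrm{int}(A)}I(x)\le \underline{J}_A$ from $\underline{\phi}$. Chaining these with the trivial inequality $\underline{J}_A\le \overline{J}_A$ gives \eqref{eq:extendedLDP}, and Proposition~\ref{prop:LPimpliesLDP} also delivers $I=\overline{I}=\underline{I}$.

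For the ``only if'' direction, assume \eqref{eq:extendedLDP} holds. Taking the upper bound at $A$ closed and the lower bound at $A$ open (using $\underline{J}\le \overline{J}$ to shift the lower bound to $\overline{\phi}$) shows that $\overline{\phi}$ itself satisfies the LDP with rate function $I$. Since $\overline{\phi}$ is max-stable, Theorem~\ref{thm:maxStVaradhan} converts this into the LP representation $\overline{\phi}(f)=\sup_{x\in S}\{f(x)-I(x)\}$ on $C_b(S)$, together with $I=\overline{I}$.

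It remains to push the representation down to $\underline{\phi}$. The lower bound $-\inf_{x\in A}I(x)\le \underline{J}_A$ for all open $A$ feeds into Proposition~\ref{prop:uniquenessrate} applied to $\underline{\phi}$, yielding $\underline{I}\le I$. Unwinding the definition of $\underline{I}$ gives $\underline{\phi}(f)\ge \sup_{x\in S}\{f(x)-\underline{I}(x)\}\ge \sup_{x\in S}\{f(x)-I(x)\}=\overline{\phi}(f)$ for every $f\in C_b(S)$; combined with the standing hypothesis $\underline{\phi}\le \overline{\phi}$, this forces $\underline{\phi}=\overline{\phi}$ on $C_b(S)$ and hence also $\underline{I}=\overline{I}=I$. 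The main subtlety is not any single computation but keeping straight which bound yields which direction, and in particular noticing that max-stability of $\overline{\phi}$ is used only once (through Theorem~\ref{thm:maxStVaradhan}) while the comparison $\underline{\phi}\le\overline{\phi}$ plus the one-sided estimate $\underline{I}\le I$ is what transfers the LP to $\underline{\phi}$.
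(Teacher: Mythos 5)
Your proof is correct and follows essentially the same route as the paper: the ``only if'' direction uses Theorem~\ref{thm:maxStVaradhan} to get the LP for $\overline{\phi}$ and then transfers it to $\underline{\phi}$ via Proposition~\ref{prop:uniquenessrate} and the sandwich $\underline{\phi}\le\overline{\phi}$ (which is exactly the content of Proposition~\ref{prop:upperlower}, unfolded), while the ``if'' direction deduces the LDP for both risk measures from the common representation. Your explicit appeal to Proposition~\ref{prop:LPimpliesLDP} for $\underline{\phi}$ in the ``if'' direction is in fact slightly more careful than the paper's citation of Theorem~\ref{thm:maxStVaradhan}, since $\underline{\phi}$ is not assumed max-stable and only the LP-implies-LDP half is needed there.
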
 
\begin{proof}

Suppose that \eqref{eq:extendedLDP} holds. 
In particular, $\overline{\phi}$ satisfies the LDP with rate function $I$ and consequently  $\overline{\phi}$ has the representation \eqref{eq:representationLower} due to Theorem~\ref{thm:maxStVaradhan}.  
Moreover, the lower bound in \eqref{eq:extendedLDP} implies that $\overline{\phi}(f)=\underline{\phi}(f)$  for all $f\in C_b(S)$ due to Proposition~\ref{prop:upperlower}. Hence, $\underline{\phi}$ also has the representation \eqref{eq:representationLower}. 
Conversely, suppose that \eqref{eq:representationLower} holds. 
By Theorem~\ref{thm:maxStVaradhan}, $\overline{\phi}$ and $\underline{\phi}$ satisfy the LDP with rate function $I$, which shows \eqref{eq:extendedLDP}. 
\end{proof}

 Next, we state a version of Theorem~\ref{th:localMaxSt} in the present context.
 
\begin{proposition}\label{cor:UpperLowerBryc}
Suppose that $\overline{\phi}$ and $\underline{\phi}$ are locally max-stable. 
If $\overline{\phi}$ satisfies one of the condition (A) or (B), then $\overline\phi$ satisfies the LP with rate function $\overline{I}$, $\underline\phi$ satisfies the LP with rate function $\underline{I}$,
and
\begin{equation}\label{eq:LDPuplo}
-\underset{x\in {\rm int}(A)}\inf \underline{I}(x)
\le \underline{J}_A\le \overline{J}_A
\le -\underset{x\in {\rm cl}(A)}\inf \overline{I}(x)\quad\mbox{for all }A\in\mathcal{B}(S).
\end{equation}
If, in addition, $-\underset{x\in A}\inf \overline{I}(x)
\le \underline{J}_A$ for every open set $A\subset S$, then $\underline{I}=\overline{I}$ and $\underline{\phi}(f)=\overline{\phi}(f)$ for all $f\in C_b(S)$.	
\end{proposition}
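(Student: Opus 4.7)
The plan is to apply Theorem~\ref{th:localMaxSt} separately to $\overline\phi$ and $\underline\phi$, then combine the two LPs with the sandwich $\underline\phi\le \overline\phi$ to obtain \eqref{eq:LDPuplo}, and finally invoke Proposition~\ref{prop:upperlower} for the last claim. Applied to $\overline\phi$ the theorem is immediate: since $\overline\phi$ is locally max-stable and satisfies (A) or (B), it yields
\[
\overline\phi(f)=\sup_{x\in S}\{f(x)-\overline{I}(x)\}\quad\mbox{for all } f\in C_b(S),
\]
together with the LDP for $\overline\phi$ with rate function $\overline{I}$; in particular $\overline{J}_A\le -\inf_{x\in\mathrm{cl}(A)}\overline{I}(x)$ for every Borel $A$.

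To apply the same theorem to $\underline\phi$, I would transfer the tightness from $\overline\phi$. Since $\underline\phi\le \overline\phi$ gives $\underline{J}_{K^c}\le \overline{J}_{K^c}$ and hence $-\underline{J}_{K^c}\ge -\overline{J}_{K^c}$ for every compact $K$, condition (A) on $\overline\phi$ at once produces $-\underline{J}_{K^c}\ge M$, so (A) holds for $\underline\phi$ and Theorem~\ref{th:localMaxSt} yields $\underline\phi(f)=\sup_{x\in S}\{f(x)-\underline{I}(x)\}$ on $C_b(S)$. Under (B), the first tightness inequality $-\underline{J}_{K^c}\ge I(\infty)-\varepsilon$ transfers in the same way; to handle the second part I would split into subcases: either $\sup_K(-\underline{J}_{K^c})=+\infty$, in which case $\underline\phi$ satisfies (A) and we conclude as before, or $\sup_K(-\underline{J}_{K^c})<+\infty$, in which case I would adapt the proof of Theorem~\ref{th:localMaxSt} under (B) by choosing the cutoff $c$ in the comparison function $\tilde f:=f 1_K+c 1_{K^c}$ through the LP of $\overline\phi$ rather than $\underline\phi$, so as to ensure $f\le \tilde f$ on $K^c$ without needing a direct bound on $\underline{I}$ at infinity.

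With both LPs in hand, the three inequalities in \eqref{eq:LDPuplo} are immediate: the lower bound $-\inf_{x\in\mathrm{int}(A)}\underline{I}(x)\le \underline{J}_A$ follows from Corollary~\ref{cor:LDPlower} applied to $\underline\phi$ on the open set $\mathrm{int}(A)\subset A$, the middle inequality is the sandwich, and the upper bound $\overline{J}_A\le -\inf_{x\in\mathrm{cl}(A)}\overline{I}(x)$ is the LDP upper bound for $\overline\phi$ applied to the closed set $\mathrm{cl}(A)\supset A$. For the final statement, under the extra hypothesis $-\inf_{x\in A}\overline{I}(x)\le \underline{J}_A$ for every open $A$, Proposition~\ref{prop:upperlower} with $I:=\overline{I}$ applies directly: combined with the already established LP of $\overline\phi$, it gives $\overline\phi=\underline\phi$ on $C_b(S)$ and $\underline{I}=\overline{I}$.

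The main obstacle will be transferring condition (B) to $\underline\phi$: while the inequality $-\underline{J}_{K^c}\ge I(\infty)-\varepsilon$ is inherited for free via the sandwich, the bound $\overline{I}(x)\le I(\infty)+\varepsilon$ on $K^c$ does not translate to $\underline{I}$, since only $\overline{I}\le \underline{I}$ is available a priori. The adaptation sketched above circumvents this by leveraging $\overline\phi$'s LP to control $f$ on $K^c$ when constructing $\tilde f$, and the bookkeeping relative to $a:=\sup_{x\in S}\{f(x)-\underline{I}(x)\}$ is what will demand care.
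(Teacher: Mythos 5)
Your treatment of case (A), of the three inequalities in \eqref{eq:LDPuplo}, and of the final claim via Proposition~\ref{prop:upperlower} is correct and matches the paper. The genuine problem sits exactly where you locate it, in transferring condition (B) to $\underline\phi$ --- but your proposed repair does not close the gap. If you set $\tilde f:=f1_K+c1_{K^c}$ with the cutoff $c:=\overline a+\overline I(\infty)+\varepsilon$ calibrated through the LP of $\overline\phi$, where $\overline a:=\sup_{x}\{f(x)-\overline I(x)\}$, then monotonicity, Lemma~\ref{lem2} and Lemma~\ref{lem:IK} applied to $\underline\phi$ give
\[
\underline\phi(f)\le \sup_{x\in K}\{f(x)-\underline I_K(x)\}\vee\big(c+\underline J_{K^c}\big)\le \underline a\vee\big(\overline a+2\varepsilon\big)=\overline a+2\varepsilon,
\]
since $\overline a\ge\underline a:=\sup_{x}\{f(x)-\underline I(x)\}$. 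Letting $\varepsilon\downarrow 0$ you only recover $\underline\phi(f)\le\overline\phi(f)$, which is the standing hypothesis, not the LP for $\underline\phi$ with rate $\underline I$. To push the second branch down to $\underline a$ you would need the cutoff at level $\underline a+\underline I(\infty)+\varepsilon$ together with an upper bound $\underline I\le\underline I(\infty)+\varepsilon$ off a compact set, and neither $\overline I\le\underline I$ nor $\underline J\le\overline J$ supplies such an upper bound.

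For what it is worth, the paper's own proof disposes of this step in one line, asserting that $\underline\phi$ inherits (A) or (B) from $\overline\phi$ ``by just noting that $\overline I\le\underline I$ and $\underline J\le\overline J$''; that reasoning is valid for (A) but has the same directional defect for (B), so you have isolated a real issue rather than missed an available trick. Indeed, the claim cannot be rescued along this route: take $S=\mathbb{N}$ discrete, $\overline\phi(f):=\sup_m f(m)$ and $\underline\phi(f):=\sup_m\{f(m)-m\}\vee\limsup_{m\to\infty}f(m)$. Both are max-stable monetary risk measures with $\underline\phi\le\overline\phi$ on $B_b(\mathbb{N})$; one checks $\overline I\equiv 0$ and $\overline J_A=0$ for every nonempty $A$, so $\overline\phi$ satisfies (B) with $\overline I(\infty)=0$, yet $\underline I(m)=m$ and $\underline\phi(1)=1>0=\sup_m\{1-\underline I(m)\}$, so $\underline\phi$ satisfies the LP with neither (A) nor (B) available to it. Your argument (and the assertion about $\underline\phi$ in case (B)) goes through only if one additionally imposes a tightness condition on $\underline\phi$ itself, or invokes the extra hypothesis of the last sentence from the outset, in which case Proposition~\ref{prop:upperlower} yields $\underline\phi=\overline\phi$ and the LP for $\underline\phi$ for free.
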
 
\begin{proof}
Since $\overline{\phi}$ satisfies one of the condition (A) or (B), it follows that $\underline{\phi}$ also satisfies one of these conditions by just noting that $\overline{I}\le \underline{I}$ and $\underline{J}\le \overline{J}$.  Hence, the first part follows directly from Theorem \ref{th:localMaxSt}. Finally, the second part is an application of Proposition~\ref{prop:upperlower}. The proof is complete.
\end{proof}

\section{Asymptotic shortfall risk measures}\label{sec:examples}

Given a non-decreasing loss function $l\colon\mathbb{R}\to\mathbb{R}$ the \emph{shortfall risk} of a bounded random variable $Z\colon\Omega\to\mathbb{R}$ is defined as
\[
\phi_l(Z):=\inf\big\{m\in\mathbb{R}\colon \mathbb{E}[l(Z-m)]\le 1\big\}.
\]
For further details on shortfall risk measures we refer to \citep{foellmer01}. 
In the following we consider a sequence $(X_n)_{n\in\mathbb{N}}$ of random variables $X_n\colon\Omega\to S$. Moreover, let
$(l_n)_{n\in\mathbb{N}}$ be a sequence of functions of the form
$l_n=\exp(w_n)$, where $w_n\colon (-\infty,+\infty]\to(-\infty,+\infty]$  is a non-decreasing function 
with $w_n(0)=0$ for all $n\in\mathbb{N}$. We work with the generalized inverse $w^{-1}_n\colon (-\infty,+\infty]\to [-\infty,+\infty]$, given by $w^{-1}_n(y):=\sup\{x\in\mathbb{R} \colon w_n(x)\le y\}$, with the convention $\sup\emptyset:=-\infty$. 
In addition, we assume that
\begin{equation}
\label{eq:asymptotic}
\limsup_{n\to\infty} w^{-1}_n(a+a_n)=\limsup_{n\to\infty} w^{-1}_n(a_n),\quad
\liminf_{n\to\infty} w^{-1}_n(a+a_n)=\liminf_{n\to\infty} w^{-1}_n(a_n)
\end{equation}
for all $a\in\mathbb{R}$ and every sequence $(a_n)_{n\in\mathbb{N}}$ in $[0,+\infty]$.

\begin{definition}\label{def:asymptoticshortfall}
The upper/lower \emph{asymptotic shortfall risk measures} $\overline{\phi},\,\underline{\phi}\colon B_b(S)\to\mathbb{R}$ are defined as
\[
\overline{\phi}(f):=\limsup_{n\to\infty} \phi_{l_n}\big( f(X_n)\big)= \limsup_{n\to\infty}\, \inf\big\{m\in\mathbb{R}\colon \mathbb{E}\big[l_n\big(f(X_n)-m\big)\big]\le 1\big\},
\]
\[
\underline{\phi}(f):=\liminf_{n\to\infty} \phi_{l_n}\big( f(X_n)\big)= \liminf_{n\to\infty}\, \inf\big\{m\in\mathbb{R}\colon \mathbb{E}\big[l_n\big(f(X_n)-m\big)\big]\le 1\big\}.
\]  
Straightforward inspection shows that $\overline{\phi},\,\underline{\phi}\colon B_b(S)\to\mathbb{R}$ are monetary risk measures. 
Define the respective rate functions $\overline{I}$, $\underline{I}$ as in \eqref{eq:Imin}, and the associated concentration functions $\overline{J}$, $\underline{J}$ as in  \eqref{eq:JA}, respectively.
\end{definition}

\begin{proposition}\label{prop:wrate}
	For every $B\in\mathcal{B}(S)$ and $x\in S$, it holds
	\[
	\overline{J}_B=-\liminf_{n\to\infty} w^{-1}_n\left(-\log\mathbb{P}(X_n\in B)\right)\quad\mbox{and}\quad
	\overline{I}(x)=\lim_{\delta\downarrow 0}\liminf_{n\to\infty} w^{-1}_n\left(-\log\mathbb{P}(X_n\in B_\delta(x))\right),
	\]
	\[
	\underline{J}_B=-\limsup_{n\to\infty} w^{-1}_n\left(-\log\mathbb{P}(X_n\in B)\right)\quad\mbox{and}\quad
	\underline{I}(x)=\lim_{\delta\downarrow 0}\limsup_{n\to\infty} w^{-1}_n\left(-\log\mathbb{P}(X_n\in B_\delta(x))\right).
	\]
\end{proposition}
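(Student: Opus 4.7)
Plan. I would reduce everything to the two identities for $\overline{J}_B$ and $\underline{J}_B$: given these, Proposition~\ref{prop:ratefunction} applied to $\overline{\phi}$ and $\underline{\phi}$ yields the rate function formulas by specialising $B=B_\delta(x)$ and sending $\delta\downarrow 0$. Set $p_n:=\mathbb{P}(X_n\in B)$ and $t_n:=-\log p_n\in[0,+\infty]$, so the targets are $\overline{J}_B=-\liminf_n w_n^{-1}(t_n)$ and $\underline{J}_B=-\limsup_n w_n^{-1}(t_n)$.

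The ``$\ge$'' direction rests on the pointwise lower bound $\phi_{l_n}(r 1_{B^c}(X_n))\ge -w_n^{-1}(t_n)$ for every $r\in\mathbb{R}$ and $n\in\mathbb{N}$. Indeed, $m<-w_n^{-1}(t_n)$ forces $w_n(-m)>t_n$ by the supremum definition of $w_n^{-1}$, whence $\mathbb{E}[l_n(r 1_{B^c}(X_n)-m)]\ge l_n(-m)\,p_n=e^{w_n(-m)}p_n>1$, violating the shortfall condition. Taking $\limsup_n$ (resp.\ $\liminf_n$) and then $\inf_{r\in\mathbb{R}}$ delivers both lower bounds.

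For the ``$\le$'' direction, my plan is to exhibit, for each $r<0$ and $\delta>0$, an explicit admissible level
\[
m_n:=\max\!\Big(-w_n^{-1}(t_n-\log 2),\; r-w_n^{-1}\bigl(-\log 2-\log(1-p_n)\bigr)\Big)+\delta.
\]
A ``$1/2+1/2$'' budget split of the shortfall condition shows that both $l_n(-m_n)p_n\le 1/2$ and $l_n(r-m_n)(1-p_n)\le 1/2$, the only input being the elementary fact $y<w_n^{-1}(a)\Rightarrow w_n(y)\le a$. Then~\eqref{eq:asymptotic} does the asymptotic bookkeeping: with $a=-\log 2$ and $a_n=t_n$ it makes the first argument of the max behave asymptotically like $-w_n^{-1}(t_n)$; with $a_n\equiv 0$, combined with $w_n(0)=0$ (which forces $w_n^{-1}(0)\ge 0$), it shows that $L_0:=\liminf_n w_n^{-1}(-\log 2)\ge 0$ is finite, and monotonicity of $w_n^{-1}$ then bounds the second argument above by $r-L_0+\varepsilon$ eventually. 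For $\overline{J}_B$, sending $r\to-\infty$ and $\delta\downarrow 0$ in the $\limsup_n$ upper bound kills the second argument and closes the estimate. For $\underline{J}_B$ I would pass to a subsequence realising $\limsup_n w_n^{-1}(t_n)=:L^*$; along it the first argument tends to $-L^*$ while the second stays below $r-L_0+o(1)$, so choosing $r<L_0-L^*$ makes the max collapse to the first term and yields $\liminf_n m_n\le -L^*+\delta$ (the degenerate case $L^*=+\infty$ following directly since $\inf_r m_n=-\infty$).

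The main technical obstacle is precisely this decoupling: without~\eqref{eq:asymptotic} the finite shift $-\log 2$ and the $p_n$-dependent perturbation $-\log(1-p_n)$ would prevent the second argument of the max from vanishing in the $r\to-\infty$ limit. The role of~\eqref{eq:asymptotic} is to show that the asymptotic behaviour of $w_n^{-1}$ is translation-invariant in its argument, so that finite shifts and nonnegative perturbations are asymptotically invisible.
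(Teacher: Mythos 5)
Your argument is correct and follows essentially the same route as the paper: the lower bounds come from restricting the shortfall constraint to the event $\{X_n\in B\}$, the upper bounds come from the same $\tfrac12+\tfrac12$ split of the constraint (the paper phrases it as bounding the sum by twice the maximum and solving for $m$, which yields exactly your $m_n$ without the $+\delta$), followed by \eqref{eq:asymptotic} and $r\to-\infty$, and the rate-function identities are then read off from Proposition~\ref{prop:ratefunction} in both treatments. The one spot where you are more explicit than the paper (which dismisses the $\liminf$ case as ``similar'') is the subsequence step for $\underline{J}_B$; there you should select the subsequence realising $\limsup_n w_n^{-1}(t_n-\log 2)$ rather than $\limsup_n w_n^{-1}(t_n)$, since \eqref{eq:asymptotic} identifies these two quantities only as full-sequence limits superior, but this is a cosmetic adjustment.
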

\begin{proof}
    We show the result for $\overline{\phi}$,  
    the argumentation for $\underline{\phi}$ is similar.  
	For every $B\in\mathcal{B}(S)$ and  $r\in\mathbb{R}$, it holds 
	\begin{align*}
	\phi_{l_n}(r 1_{B^c})
=&\inf\Big\{m\in\R\colon\exp\big(w_n(r - m)\big) \mathbb{P}(X_n\in B^c) + \exp(w_n(- m)\big)\mathbb{P}(X_n\in B)\le 1\Big\}\\
\le &\inf\Big\{m\in\R\colon 2\big(\exp\big(w_n(r - m)\big) \mathbb{P}(X_n\in B^c) \vee \exp(w_n(- m)\big)\mathbb{P}(X_n\in B)\big)\le 1\Big\}\\
	=&\inf \left\{m\in\R\colon 
	\begin{array}{c}
	w_n(r - m)\le -\log 2 - \log \mathbb{P}(X_n\in B^c)\\
	w_n(- m)\le -\log 2 - \log \mathbb{P}(X_n\in B)
	\end{array}
	\right\}\\
	=&\inf \left\{m\in\R\colon 
	\begin{array}{c}
	m\ge  r- w_n^{-1}\big(-\log 2 - \log \mathbb{P}(X_n\in B^c)\big)\\
	m\ge -w_n^{-1}\big(-\log 2 - \log \mathbb{P}(X_n\in B)\big)
	\end{array}
	\right\}\\
	=&\Big( r- w_n^{-1}\big(-\log 2 - \log \mathbb{P}(X_n\in B^c)\big)\Big)\vee \Big(-w_n^{-1}\big(-\log 2 - \log \mathbb{P}(X_n \in B)\big)\Big).
	\end{align*}
	Using \eqref{eq:asymptotic}, we get
	\[
	\overline{\phi}(r 1_{B^c})\le\Big( r- \liminf_{n\to\infty}w_n^{-1}\big( - \log \mathbb{P}(X_n\in B^c)\big)\Big)\vee\Big(-\liminf_{n\to\infty}w_n^{-1}\big(- \log \mathbb{P}(X_n\in B)\big)\Big).
	\]
	By letting $r\to-\infty$, we obtain
	\[
	\overline{J}_B\le - \liminf_{n\to\infty} w^{-1}_n(-\log \mathbb{P}(X_n\in B)).
	\]  
	On the other hand, for every $r\in\mathbb{R}$, we have
	\begin{align*}
	\phi_{l_n}(r 1_{B^c})= & \inf\Big\{m\in\mathbb{R}\colon\exp\big(w_n(r -  m)\big) \mathbb{P}(X_n\in B^c) + \exp\big(w_n(- m)\big)\mathbb{P}(X_n\in B)\le 1\Big\}\\
	\ge & \inf\Big\{m\in\mathbb{R}\colon \exp\big(w_n(- m)\big)\mathbb{P}(X_n\in B)\le 1\Big\}\\
	=& -w^{-1}\Big(-\log \mathbb{P}(X_n\in B)\Big).
	\end{align*}
	From there, we get
	\[
	\overline{\phi}(r 1_{B^c})\ge -\liminf_{n\to\infty} \frac{1}{n}w^{-1}(-\log \mathbb{P}(X_n\in B)).
	\]
	By letting $r\to-\infty$, we obtain
	$\overline{J}_B\ge - \liminf_{n\to\infty} w^{-1}_n(-\log \mathbb{P}(X_n\in B))$.
	
	Finally, for $x\in S$, it follows from Proposition \ref{prop:ratefunction} that
	\[
	\overline{I}(x)=-\lim_{\delta\downarrow 0}\overline{J}_{B_\delta(x)}=\lim_{\delta\downarrow 0}\liminf_{n\to\infty} w^{-1}_n\big(-\log\mathbb{P}(X_n\in B_\delta(x))\big).
	\]
The proof is complete.	
\end{proof}

By Proposition \ref{prop:wrate}, it holds  $-\overline{J}_{K^c}=\liminf_{n\to\infty} w^{-1}_n\big(-\log\mathbb{P}(X_n\in K^c)\big)$ for every compact $K\subset S$. Hence, conditions (A) and (B) in Theorem \ref{th:localMaxSt} give rise to the following modified versions: 
\begin{itemize}
\item[(A')] For every $M>0$ there exists a compact set $K\subset S$ such that
	\[
\liminf_{n\to\infty} w^{-1}_n\big(-\log\mathbb{P}(X_n\in K^c)\big)\ge M. 
	\]
\item[(B')] There exists $\overline{I}(\infty)\in \mathbb{R}$ such that for every $\varepsilon>0$  there exists a compact set $K\subset S$ which satisfies
\[
\liminf_{n\to\infty} w^{-1}_n\big(-\log\mathbb{P}(X_n\in K^c)\big)\ge \overline{I}(\infty)-\varepsilon
\]
and
\[ 
\overline{I}(x)\le \overline{I}(\infty)+\varepsilon\quad\mbox{ for all }x\in K^c.
\] 
\end{itemize}

As a result, we obtain the following generalization of Bryc's lemma.
\begin{proposition}\label{prop:perturbedBryc}
	Suppose that $(X_n)_{n\in\mathbb{N}}$ satisfies one of the conditions (A'), (B'). 
	Then, for all $f\in C_b(S)$,
	\begin{equation}\label{eq:repShortFall}
		\limsup_{n\to\infty}\inf\Big\{m\in\mathbb{R}\colon \mathbb{E}\big[\exp\big(w_n(f(X_n)- m)\big)\big]\le 1 \Big\}=\sup_{x\in S}\big\{f(x)-\overline{I}(x)\big\}
	\end{equation}
	\begin{equation}\label{eq:repShortFallII}
		\liminf_{n\to\infty}\inf\Big\{m\in\mathbb{R}\colon\mathbb{E}\big[\exp\big(w_n(f(X_n)- m)\big)\big]\le 1 \Big\}=\sup_{x\in S}\big\{f(x)-\underline{I}(x)\big\}
	\end{equation}
 and for every $A\in \mathcal{B}(S)$, it holds
	\begin{align*}\label{eq:LDPShortFallI}
		-\underset{x\in{\rm int}(A)}\inf \underline{I}(x)&\le  -\limsup_{n\to\infty} w^{-1}_n\big(-\log\mathbb{P}(X_n\in A)\big)\\
		&\le -\liminf_{n\to\infty} w^{-1}_n\big(-\log\mathbb{P}(X_n\in A)\big)\le
		-\underset{x\in{\rm cl}(A)}\inf \overline{I}(x)
	\end{align*}
Moreover, if $\limsup_{n\to\infty} w^{-1}_n\big(-\log\mathbb{P}(X_n\in A)\big)\le \inf_{x\in A} \overline{I}(x)$ for every open set $A\subset S$, then $\overline{I}=\underline{I}$ and \eqref{eq:repShortFall}=\eqref{eq:repShortFallII}, in which case the $\limsup$/$\liminf$ is in fact a limit.
\end{proposition}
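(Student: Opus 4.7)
The strategy is to verify the hypotheses of Proposition \ref{cor:UpperLowerBryc} for the pair $(\overline\phi,\underline\phi)$ and then translate the conclusions back to the $\limsup/\liminf$ language via Proposition \ref{prop:wrate}. The main technical preliminary, which I expect to be the chief obstacle, is to show that $\overline\phi$ and $\underline\phi$ are (locally) max-stable. For fixed $n$ the shortfall risk $\phi_{l_n}$ is \emph{not} itself max-stable, so the argument must absorb a factor of $2$ in the limit, exactly as in example~(c) of Section~\ref{ex:maxSt}. For $f,g\in B_b(S)$ and $A_n:=\{f(X_n)\ge g(X_n)\}$,
\[
\mathbb{E}\bigl[l_n((f\vee g)(X_n)-m)\bigr] = \mathbb{E}\bigl[l_n(f(X_n)-m)1_{A_n}\bigr]+\mathbb{E}\bigl[l_n(g(X_n)-m)1_{A_n^c}\bigr]\le 2\max\bigl(\mathbb{E}[l_n(f(X_n)-m)],\mathbb{E}[l_n(g(X_n)-m)]\bigr),
\]
from which one reads off, in the spirit of the computation in the proof of Proposition~\ref{prop:wrate}, a pre-asymptotic bound for $\phi_{l_n}((f\vee g)(X_n))$ in which the correction is exactly a shift by $-w_n^{-1}(-\log 2 + \cdot)$. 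Hypothesis~\eqref{eq:asymptotic} is tailored to absorb this additive $\log 2$ shift in the $\limsup$ (resp.\ $\liminf$), and after passing to the limit one obtains $\overline\phi(f\vee g)\le \overline\phi(f)\vee \overline\phi(g)$, and analogously for $\underline\phi$.

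Once max-stability is granted, the tightness conditions are handled directly by Proposition~\ref{prop:wrate}: for every compact $K\subset S$,
\[
-\overline J_{K^c}=\liminf_{n\to\infty}w_n^{-1}\bigl(-\log \mathbb{P}(X_n\in K^c)\bigr),\qquad \overline I(x)=\lim_{\delta\downarrow 0}\liminf_{n\to\infty}w_n^{-1}\bigl(-\log\mathbb{P}(X_n\in B_\delta(x))\bigr),
\]
so (A$'$) is verbatim condition (A) for $\overline\phi$ and (B$'$) is condition (B) with $\overline I(\infty)=I(\infty)$. Hence the hypotheses of Proposition~\ref{cor:UpperLowerBryc} are satisfied: $\overline\phi$ satisfies the LP with rate function $\overline I$, $\underline\phi$ satisfies the LP with rate function $\underline I$, and the two-sided estimate \eqref{eq:LDPuplo} holds. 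Substituting $\overline\phi(f)=\limsup_n\phi_{l_n}(f(X_n))$, $\underline\phi(f)=\liminf_n\phi_{l_n}(f(X_n))$, $\overline J_A=-\liminf_n w_n^{-1}(-\log\mathbb{P}(X_n\in A))$ and $\underline J_A=-\limsup_n w_n^{-1}(-\log\mathbb{P}(X_n\in A))$ from Proposition~\ref{prop:wrate} produces exactly \eqref{eq:repShortFall}, \eqref{eq:repShortFallII} and the LDP bounds stated in the proposition.

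For the final assertion, the extra hypothesis $\limsup_{n\to\infty}w_n^{-1}(-\log\mathbb{P}(X_n\in A))\le \inf_{x\in A}\overline I(x)$ for every open $A$ translates, once again by Proposition~\ref{prop:wrate}, into $-\inf_{x\in A}\overline I(x)\le \underline J_A$ for every open $A$. This is precisely the hypothesis of the ``if, in addition'' clause of Proposition~\ref{cor:UpperLowerBryc}, which yields $\overline I=\underline I$ and $\overline\phi(f)=\underline\phi(f)$ for every $f\in C_b(S)$; in particular the $\limsup$ and $\liminf$ in \eqref{eq:repShortFall}--\eqref{eq:repShortFallII} coincide and are therefore the limit.
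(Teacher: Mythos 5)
Your overall architecture is the same as the paper's: verify the hypotheses of Proposition \ref{cor:UpperLowerBryc} and translate its conclusions back through Proposition \ref{prop:wrate}; those parts, including the identification of (A$'$)/(B$'$) with (A)/(B) and the final ``if, in addition'' clause, are correct. The gap is exactly in the step you flag as the chief obstacle. The bound $\mathbb{E}[l_n((f\vee g)(X_n)-m)]\le 2\max\big(\mathbb{E}[l_n(f(X_n)-m)],\mathbb{E}[l_n(g(X_n)-m)]\big)$ is fine, but the claim that the factor $2$ then amounts to ``exactly a shift by $-w_n^{-1}(-\log 2+\cdot)$'' is unjustified for general $f,g\in B_b(S)$. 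In the computation inside the proof of Proposition \ref{prop:wrate} the position is two-valued, so each expectation is a single term $e^{w_n(c-m)}\mathbb{P}(\cdot)$ and the constraint ``$\le 1/2$'' can be solved for $m$ by inverting $w_n$; that is the only point where $w_n^{-1}$, and hence hypothesis \eqref{eq:asymptotic}, can act. For a general bounded $f$, the constraint $2\,\mathbb{E}[e^{w_n(f(X_n)-m)}]\le 1$ averages over many values of $f(X_n)$ with $w_n$ applied pointwise \emph{before} the expectation; it cannot be rewritten as $m\ge\phi_{l_n}(f(X_n))$ plus a correction expressible via $w_n^{-1}(-\log 2+\cdot)$, and \eqref{eq:asymptotic} gives no direct handle on it (it would for $w_n=nw$ with $w$ a continuous increasing bijection, but not for the general non-decreasing $w_n$ admitted here).

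To close this gap one must first reduce to positions taking finitely many values, and this is precisely what the paper does: it fixes a compact $K$, covers it by finitely many balls on which $f$ oscillates by at most $\varepsilon$, dominates $f$ by the corresponding simple function, and only then applies an ``$N\cdot\max$'' bound, with the resulting $\log N$ absorbed by \eqref{eq:asymptotic}; the case of a general compact $K\neq S$ is then handled by an auxiliary sequence $Z_n$ with values in $K\cup\{x_0\}$. This is also why the paper establishes only \emph{local} max-stability (the finite cover needs compactness) and routes the argument through Proposition \ref{cor:UpperLowerBryc} for locally max-stable risk measures: your sketch, if it worked as written, would yield global max-stability on $B_b(S)$, a stronger statement the paper deliberately does not claim. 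Finally, be aware that your ``analogously for $\underline\phi$'' hides a real asymmetry: after discretization the lower shortfall produces $\liminf_n\max_{1\le i\le N}a_{n,i}$, and the inequality $\liminf_n\max_i a_{n,i}\le\max_i\liminf_n a_{n,i}$ is false in general, so the $\underline\phi$ half cannot be dispatched by symmetry with the $\limsup$ computation.
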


\begin{proof}
By assumption $\overline{\phi}$ satisfies  
either (A) or (B) in Proposition \ref{cor:UpperLowerBryc}. Hence, all assertions would follow from Proposition \ref{cor:UpperLowerBryc}, if both $\overline{\phi}$ and $\underline{\phi}$ were locally max-stable.  

 Suppose that $K\subset S$ is compact. We prove that $\overline{\phi}|_{C^K(S)}$ is max-stable, 
 the argumentation for $\underline{\phi}|_{C^K(S)}$ is similar. We first assume that $K=S$.  
    Fix $f\in C(S)$ and $\varepsilon>0$. 
	By compactness, there exist $x_1,\ldots,x_N\in K$ and $\delta_1,\ldots,\delta_N>0$ such that
	$K\subset \bigcup_{i=1}^N B_{\delta_i}(x_i)$,
	\[
	f(x)\le f(x_i)+\varepsilon\quad\mbox{ for all }x\in B_{\delta_i}(x_i) 
	\]
	and
	\[
	(\overline{I}(x_i)-\varepsilon)\wedge \varepsilon^{-1}\le -\overline{J}_{B_{\delta_i}(x_i)}.
	\] 
	Then, it holds
	\begin{align*}
	\phi_{l_n}(f)&\le \inf\Big\{m\in\R\colon \sum_{i=1}^N \exp\big(w_n(f(x_i)+\varepsilon- m)\big)\mathbb{P}(X_n\in B_{\delta_i}(x_i))\le 1\Big\}\\
	&\le \inf\Big\{m\in\R\colon N \max_{1\le i\le N}\big\{ \exp\big(w_n(f(x_i)+\varepsilon- m)\big)\mathbb{P}(X_n\in B_{\delta_i}(x_i))\big\}\le 1\Big\}\\
	&\le \inf\Big\{m\in\R\colon N  \exp\big(w_n(f(x_{i_n})+\varepsilon- m)\big)\mathbb{P}(X_n\in B_{\delta_{i_n}}(x_{i_n}))\le 1\Big\}\\
	&=\Big\{f(x_{i_n})+\varepsilon - w^{-1}_n\big(- \log N - \log \mathbb{P}(X_n\in B_{\delta_{i_n}}(x_{i_n}))\big)\Big\},
	\end{align*}
	where $i_n:={\rm{arg\,max}}_{1\le i\le N}\big\{\exp\big(w_n(f(x_i)+\varepsilon- m)\big)\mathbb{P}(X_n\in B_{\delta_i}(x_i))\big\}$. 
	By assumption \eqref{eq:asymptotic} and Proposition \ref{prop:wrate} we have
	\begin{align*}
	\overline{\phi}(f)&\le\limsup_{n\to\infty}\max_{1\le i\le N}\Big\{f(x_{i})+\varepsilon -  w^{-1}_n\big(- \log N - \log \mathbb{P}(X_n\in B_{\delta_{i}}(x_{i}))\big)\Big\}\\
	&\le\max_{1\le i\le N}\Big\{f(x_{i})+\varepsilon - \liminf_{n\to\infty} w^{-1}_n\big(- \log N - \log \mathbb{P}(X_n\in B_{\delta_{i}}(x_{i}))\big)\Big\}\\
	&=f(x_{i_0})+\varepsilon + \overline{J}_{B_{\delta_{i_0}}(x_{i_0})}\\
	&\le f(x_{i_0})+\varepsilon - (\overline{I}(x_{i_0})-\varepsilon)\wedge \varepsilon^{-1},
	\end{align*}
	where 
	$i_0:={\rm{arg\,max}}_{1\le i\le N}\big\{ f(x_{i})+\varepsilon + \overline{J}_{B_{\delta_{i}}(x_{i})}\big\}$.
	Since $\varepsilon>0$ was arbitrary, we get $\overline{\phi}(f)\le \sup_{x\in S}\{f(x)-\overline{I}(x)\}$. 
    By definition of $\overline{I}$, it holds $\overline{\phi}(f)\ge \sup_{x\in S}\{f(x)-\overline{I}(x)\}$, which shows
    \eqref{eq:repShortFall}. As a consequence,
    $\overline{\phi}$ is max-stable on $C(S)$.

	In a second step, we assume that $K\neq S$. Fix $x_0\in S\setminus K$. Further, let $(Z_n)_{n\in\mathbb{N}}$ be the sequence of random variables with values in $S$ defined as
	$Z_n(\omega)=X_n(\omega)$ if $\omega\in X^{-1}_n(K)$, and $Z_n(\omega)=x_0$ otherwise.
	Define 
	\[{\phi}_K\colon C_b(K\cup\{x_0\})\to \R,\quad {\phi}_K(f):=\limsup_{n\to\infty}\overline{\phi}_{l_n}\big(f(Z_n)\big).\]
	Since $K\cup \{x_0\}$ is compact, it follows from the first part of the proof that $\phi_K$ is max-stable. Moreover, since for every  $f\in C(K)$ and all $r\in\mathbb{R}$ it holds
	\[
	\overline{\phi}(f 1_K + r 1_{K^c})={\phi}_K(f 1_K + r 1_{\{x_0\}}),
	\]
	it follows that $\overline{\phi}\mid_{C^K(S)}$ is max-stable.
\end{proof}

\begin{example}\label{ex:entropy}
The upper/lower asymptotic entropic risk measure corresponds to the sequence $l_n(x)=\exp(n x)$, $n\in\mathbb{N}$. 
Then, $w_n(x)=n x$ and $w^{-1}_n(y)=\frac{1}{n} y$. 
Inspection shows that $\overline{\phi}$ and $\underline{\phi}$ coincide with the upper and lower asymptotic entropies, respectively, and the condition \eqref{eq:asymptotic} holds. 
From Proposition~\ref{prop:wrate} we obtain
\begin{equation*}\label{eq:Jexp}
\overline{J}_A
=\limsup_{n\to\infty}\frac{1}{n}\log\mathbb{P}(X_n\in A)\quad\mbox{and}\quad 
\underline{J}_A=\liminf_{n\to\infty}\frac{1}{n}\log\mathbb{P}(X_n\in A)
\end{equation*}
for all $A\in\mathcal{B}(S)$. 
Moreover, the minimal rate functions are given by
\begin{equation*}\label{eq:rateexp}
\overline{I}(x)=-\lim_{\delta\downarrow 0}\limsup_{n\to\infty}\frac{1}{n}\log\mathbb{P}(X_n\in B_\delta(x))
\end{equation*}
\begin{equation*}\label{eq:rateexpII}
\underline{I}(x)=-\lim_{\delta\downarrow 0}\liminf_{n\to\infty}\frac{1}{n}\log\mathbb{P}(X_n\in B_\delta(x))
\end{equation*}
for all $x\in S$. Here, the condition (A') corresponds to the exponential tightness of the sequence $(X_n)_{n\in\N}$. 
\end{example}


\begin{example}\label{ex:shortfallBij}
Let $l_n(x):=\exp(n w(x))$ where $w\colon (-\infty,+\infty]\to (-\infty,+\infty]$ is an increasing bijection with $w(0)=0$.  
Then, $w_n(x)=n w(x)$, $w_n^{-1}(y)=w^{-1}(y/n)$, and it can be checked that $w^{-1}_n$ satisfies \eqref{eq:asymptotic}.  
By Proposition \ref{prop:wrate}, we obtain the minimal rate functions
\begin{equation}
\overline{I}(x)=\lim_{\delta\downarrow 0}\liminf_{n\to\infty}w^{-1}\Big(-\frac{1}{n}\log\mathbb{P}(X_n\in B_\delta(x))\Big)
\end{equation}
\begin{equation}
\underline{I}(x)=\lim_{\delta\downarrow 0}\limsup_{n\to\infty}w^{-1}\Big(-\frac{1}{n}\log\mathbb{P}(X_n\in B_\delta(x))\Big)
\end{equation}
for all $x\in S$. 
For instance, 
let $p>0$ be a positive real number, and $w(x):=\rm{sgn}(x)|x|^{p}$. Then,  $w^{-1}(y)=\rm{sgn}(y)|y|^{1/p}$ and the minimal rate functions are given by
\begin{equation}
\overline{I}(x)=\lim_{\delta\downarrow 0}\liminf_{n\to\infty}\Big(-\frac{1}{n}\log\mathbb{P}(X_n\in B_\delta(x))\Big)^{1/p}
\end{equation}
\begin{equation}
\underline{I}(x)=\lim_{\delta\downarrow 0}\limsup_{n\to\infty}\Big(-\frac{1}{n}\log\mathbb{P}(X_n\in B_\delta(x))\Big)^{1/p}
\end{equation}
for all $x\in S$.
\end{example}

\begin{remark}
The previous example can be interpreted as a transformation of the classical LDP, which corresponds to the asymptotic entropy. To that end, suppose that $(X_n)_{n\in\mathbb{N}}$ is exponentially tight, and $\overline{\phi}_{\rm ent}(f)=\underline{\phi}_{\rm ent}(f)$ for all $f\in C_b(S)$. Define the minimal rate function $I_{\min}(x):=\sup_{f\in C_b(S)}\{f(x)-\overline{\phi}_{\rm ent}(f)\}$.   
By Theorem \ref{th:localMaxSt}, the asymptotic entropies $\overline{\phi}_{\rm ent}$ and $\underline{\phi}_{\rm ent}$ satisfy the LDP with rate function $I_{\min}$, which in line with Example \ref{ex:entropy} implies that 
the sequence $(X_n)_{n\in\mathbb{N}}$ satisfies the classical LDP
 \begin{equation}\label{eq:LDPclassical}
-\underset{x\in \textnormal{int}(A)}\inf I_{\min}(x)\le \underset{n\to\infty}\liminf \frac{1}{n} \log\mathbb{P}(X_n\in A)
\le \underset{n\to\infty}\limsup \frac{1}{n} \log\mathbb{P}(X_n\in A)
\le -\underset{x\in\textnormal{cl}(A)}\inf I_{\min}(x)
\end{equation}
for every Borel set $A$; see also Bryc's lemma \citep[Theorem 4.4.2]{zeitouni1998large}.

Let $v\colon(-\infty,+\infty]\to(-\infty,+\infty]$ be an increasing bijection with $v(0)=0$. 
Since $v$ is continuous, by applying $v$ to \eqref{eq:LDPclassical}, we obtain
\begin{align}
-\underset{x\in\textnormal{int}(A)}\inf (v\circ I_{\min})(x)&\le -\underset{n\to\infty}\limsup\, v\Big(-\frac{1}{n} \log\mathbb{P}(X_n\in A)\Big)\nonumber\\ 
&\le -\underset{n\to\infty}\liminf\, v\Big(-\frac{1}{n} \log\mathbb{P}(X_n\in A)\Big)
\le -\underset{x\in \textnormal{cl}(A)}\inf (v\circ I_{\min})(x). \label{eq:perturbedLDP}
\end{align} 
Consider the sequence of functions $w_n(x):=n v^{-1}(x)$. The sequence of the respective inverses $w_n^{-1}(y):=v(y/n)$ satisfies the condition \eqref{eq:asymptotic}. For the corresponding shortfall risk measures $\underline{\phi}$ and  $\overline{\phi}$, it follows from Proposition \ref{prop:perturbedBryc} that the associated minimal rate functions are given by \[\overline{I}=\underline{I}=v\circ I_{\min}.\] 
Also, it can be checked that the exponential tightness of $(X_n)_{n\in\mathbb{N}}$ implies that $\overline{\phi}$ satisfies the condition (A'). Therefore, Proposition~\ref{prop:perturbedBryc} implies
\begin{equation*}\label{eq:perturbedLP}
		\lim_{n\to\infty}\inf\Big\{m\in\mathbb{R}\colon\mathbb{E}\big[\exp\big(n v^{-1}(f(X_n)- m)\big)\big]\le 1 \Big\}=\sup_{x\in S}\big\{f(x)-(v\circ I_{\min})(x)\big\}
\end{equation*}
for all $f\in C_b(S)$.

In particular, the presented theory allows for an explicit form of the LP, which corresponds to the transformed LDP \eqref{eq:perturbedLDP}, by means of asymptotic shortfall risk measures. A natural question is to determine the rate functions associated to asymptotic shortfall risk measure, or even more general max-stable monetary risk measures in case that $(X_n)_{n\in\mathbb{N}}$ is given by the sample means of an i.i.d.~sequence of random variables. This will be part of a forthcoming work. 
\end{remark}

%
%

 \section*{Acknowledgements}
The authors would like to thank Daniel Bartl, Stephan Eckstein, Dieter Kadelka and Ariel Neufeld for helpful discussions and comments. 
They also are grateful to an anonymous referee for a careful review of the manuscript and valuable comments which have improved the presentation of the results. 
The second author was partially supported by the grant Fundaci\'{o}n S\'{e}neca 20903/PD/18.





\end{document}